\documentclass[a4paper,11pt]{article}
\usepackage{geometry}
\usepackage[english]{babel}
\usepackage{authblk}
\author[1]{Wenjing Zhang}
\author[2]{Xiaowen Zhou}
\affil[1]{Laboratory of Mathematics and Complex Systems, School of Mathematical Sciences, Beijing Normal University, Beijing, China}
\affil[2]{Department of Mathematics and Statistics, Concordia University, Montreal, Canada}

\usepackage{amsfonts,amsmath,mathrsfs,amssymb}
\usepackage{ctex}
\usepackage{hyperref}
\usepackage{amsthm}
\usepackage{enumitem}
\usepackage{color}
\usepackage{bbm}

\def\beqnn{\begin{eqnarray*}}\def\eeqnn{\end{eqnarray*}}

\newtheorem{theorem}{Theorem}[section]% meant for sectionwise numbers
\newtheorem{proposition}[theorem]{Proposition}
\newtheorem{lemma}[theorem]{Lemma}
\newtheorem{coro}[theorem]{Corollary}% 

\newtheorem{example}{Example}%
\newtheorem{remark}{Remark}%

\newtheorem{condition}{Condition}%
\numberwithin{equation}{section}
\newcommand{\dd}{\mathrm{d}}

\def\benumerate{\begin{enumerate}}\def\eenumerate{\end{enumerate}}
\def\bitemize{\begin{itemize}}\def\eitemize{\end{itemize}}

\def\beqlb{\begin{eqnarray}}\def\eeqlb{\end{eqnarray}}
\def\beqnn{\begin{eqnarray*}}\def\eeqnn{\end{eqnarray*}}

\def\<{\langle}\def\>{\rangle}

\begin{document}
	
	\title{Propagation of support for super-Brownian motion with general branching mechanism}
	\maketitle
\abstract{
We study the spatial propagation of super-Brownian motion on $\mathbb{R}^d$ with general critical or subcritical (spatially dependent) branching mechanisms. Under local spatial lower bounds satisfying a Keller-Osserman type integrability condition, we establish a quantitative upper bound for the short-time probability that the support exits a prescribed neighborhood of its initial support. The estimate has a Gaussian-tail form and is obtained through weighted occupation times, Feynman–Kac representations, singular elliptic boundary blow-up estimates, and mild comparison arguments for log-Laplace equations. As an application, we derive the compact support property directly for spatially dependent branching mechanisms satisfying suitable local lower bounds. This yields a sufficient compact-support criterion expressed in terms of the inverse Keller integral. In particular, for  spatially dependent super-Brownian motions with stable branching,  we generalize the compact support results of Engländer--Pinsky and Ren.

   }
 \vskip 0.5cm
 Keywords: super-Brownian motion, general branching mechanism, spatially dependent branching, compact support property, Keller integral.  
\vskip 0.5cm

 AMS 2020 classification: 60J68, 60G57.
 
	\section{Introduction}\label{sec1}
	\indent %In the theory of stochastic processes, 
    Dawson-Watanabe superprocesses represent an important class of measure-valued branching processes, systematically studied by Watanabe \cite{Wata68}, Dawson \cite{Dawson75}, among others, beginning in the late 1960s. Its study originated from in-depth investigations into the scaling limit  of branching particle systems and the need for modeling random evolution processes with spatial structure. One classical model is the super-Brownian motion, where the spatial motion is a Brownian motion. Classical studies often assume a quadratic (i.e. binary branching) or stable branching mechanism, and such models have been extensively studied in terms of support properties and extinction behaviors. However, for general non-self-similar branching mechanisms, especially in spatially dependent settings, explicit quantitative upper bounds on short-time propagation of support remain much less developed.

	The foundational works by Dawson \cite{Dawson75}, Watanabe \cite{Wata68}, and others established the existence and basic properties of superprocesses. Subsequent research has extensively explored their path properties, including the compact support property and propagation rate of the support. In the case of the binary branching mechanism, Iscoe \cite{Iscoe86} made a seminal contribution by introducing the method of weighted occupation time. This powerful technique provided a framework for analyzing the spatial spread of the superprocess and was instrumental in establishing precise estimates for the growth of the support, which, along with the probabilistic potential theory approach developed by Dynkin \cite{Dynkin1991PDE, MR1280712}, formed the foundation for much of the later analysis on the super-Brownian motion. Dawson, Iscoe, and Perkins \cite{DIP89} studied the path properties for super-Brownian motion systematically and established key sample path properties of super-Brownian motion, including a L\'evy-type modulus of continuity for the support, exact asymptotics for hitting probabilities, the exact Hausdorff measure of the range, and sharp criteria for the polarity and dimension of multiple points. For super-Brownian motions driven by stable and spatially constant branching mechanisms, the propagation speed and support properties are well-understood. In particular, due to the  self-similarity and scaling properties inherent in stable branching, Dawson and Vinogradov \cite{dawson1994almost} obtained rather sharp   upper and lower bounds for the speed of the support propogation in a short time. Interestingly, Dawson and Vinogradov explicitly pointed out   that their propagation results might potentially be carried over to more general branching mechanisms under certain restrictions; see \cite[Remark 2.1]{dawson1994almost}. Later, Dhersin and Le Gall \cite{DhersinLeGall98} established the Kolmogorov test for the super-Brownian motion with binary branching mechanism. This integral test gives precise information on the speed of  super-Brownian motion started from a Dirac mass moving away from its starting point, which refines the previous results due to Dawson and Vinogradov \cite{dawson1994almost}.
    
    %The core difficulty lies in the fact that the 
    The analysis becomes  more involved  when the branching mechanism lacks self-similarity or depends  on spatial position. In the absence of exact scaling properties, determining precise asymptotics for the macroscopic propagation speed remains  challenging. Although fine microscopic path properties—such as the Hausdorff dimension of the support and hitting probabilities of microscopic small balls—have been analyzed for general mechanisms using the Brownian snake (see, e.g., Delmas \cite{Delmas99}), explicit quantitative estimates for the macroscopic speed of propagation remain elusive. Consequently, much of the existing literature on super-Brownian motion with general and spatially dependent settings has focused on qualitative support properties rather than explicit quantitative speed estimates. For example, Sheu \cite{Sheu94,SHEU1997129} provided definitive criteria for the compactness of the support. Later, Engländer and Pinsky \cite{JP99,JP06} related this compact support property to qualitative uniqueness criteria for the associated semi-linear equations, and Ren \cite{Ren04} systematically investigated the support properties of critical super-Brownian motions equipped with spatially dependent branching rates. Hesse and Kyprianou \cite{HK14} studied the mass of a super-Brownian motion as it first exits an increasing sequence of balls, which constitutes a time-inhomogeneous continuous-state branching process. In this perspective,  Sheu's compact-support condition is  transformed into Grey's condition for a limiting continuous-state branching process.

In this paper, we  generalize the result of Dawson and Vinogradov \cite{dawson1994almost} to superprocesses whose branching mechanism admits spatially dependent local lower bounds. Rather than seeking a matching pair of upper and lower bounds, which rely heavily on self-similarity, our primary result establishes a short-time propagation estimate under the local lower bound Condition \ref{con_localize}. Specifically, for an initial mass $\mu$ with compact support $S(\mu)$, our main result (Theorem \ref{th1}) shows that the probability of the process propagating beyond an $R$-neighborhood of $S(\mu)$ before a short time $t$ admits a Gaussian-tail bound with a multiplier   controlled by the inverse Keller integral associated with the local lower bound branching mechanism.
This explicit speed estimate naturally implies the qualitative compact support property as a straightforward consequence. We formulate the compact-support criterion directly for mechanisms with local constant lower bounds (Theorem \ref{th:csp_unify}); the global uniform lower bound case (Corollary \ref{coro2}) and the spatially constant case are then recovered as special cases. This work thus extends the classical results for binary and stable branching to a much wider class of models, enhancing our understanding of superprocesses in non-homogeneous environments. The proof of Corollary \ref{coro:regularVarying} in Section
\ref{sec:proof_regular_varying} shows that the criterion is not restricted
to exact polynomial branching mechanisms.

The remainder of this paper is structured as follows. In Section \ref{Preliminaries} we introduce  super-Brownian motions with general branching mechanisms and state the assumptions used throughout the paper. In Section \ref{mainre} we  state the main results. Theorem \ref{th1} gives a quantitative upper bound for the short-time propagation probability, and Theorem \ref{th:csp_unify} and Corollary \ref{coro2} derive compact-support consequences for spatially dependent branching mechanisms. Section \ref{sec2} proves the propagation estimate using weighted occupation
times, Feynman--Kac representations, and mild comparison arguments for
log-Laplace equations. Section \ref{sec4} applies the estimate to compact-support properties and proves the regularly varying criterion stated in Corollary \ref{coro:regularVarying}; the stable-type specialization and its comparison with the classical criteria are discussed immediately after Corollary \ref{coro:regularVarying}.

\section{Preliminaries}\label{Preliminaries}
In this paper, we focus on the support property of super-Brownian motions on $\mathbb{R}^d$ , $d\geq 1$, with general branching mechanisms. We will first define general branching mechanisms and give an explicit construction and characterization of the corresponding superprocesses.

By a general branching mechanism, we mean a function $\Psi$ defined on $\mathbb{R}^d\times[0,\infty)$ of the following form
$$\Psi(x,u)=\alpha(x)u+\beta(x) u^2+\int_0^\infty (e^{-\lambda u}-1+\lambda u)\pi(x,\mathrm{d}\lambda),\quad x\in\mathbb{R}^d, u\geq0$$
where  $\alpha(\cdot)$ and $\beta(\cdot)$ are bounded continuous functions on $\mathbb{R}^d$, $\beta$ is nonnegative and $(\lambda\land \lambda^2)\pi(x,\mathrm{d}\lambda)$ is a bounded kernel from $\mathbb{R}^d$ to $(0,\infty)$. 
	We say that $\Psi$ is \textit{spatially constant} if $\alpha(x), \beta(x)$ and $\pi(x,\mathrm{d}\lambda)$ do not depend on the spatial position $x$. For ease of notation, we will always use $\Phi$ to denote a spatially constant branching mechanism of the form
	$$\Phi(u)=\alpha u+\beta u^2+\int_0^\infty (e^{-\lambda u}-1+\lambda u)\pi(\mathrm{d}\lambda),\quad u\geq0,$$
	where $\alpha\in\mathbb{R},\beta\geq0$, $\pi(\mathrm{d}\lambda)$ is a $\sigma$-finite measure on $(0,\infty)$ that satisfies
	$$\int_{(0,\infty)}  (\lambda\land \lambda^2)\pi(\mathrm{d}\lambda)<\infty.$$
	Otherwise, we say that the branching mechanism $\Psi$ is \textit{spatially dependent}. We say that the branching mechanism $\Psi$ is \textit{subcritical} if $\alpha(x)> 0, \forall x\in\mathbb{R}^d$ (or $\alpha>0$ for spatially constant branching), and \textit{critical} if $\alpha(x)\equiv0$ (or $\alpha=0$ for spatially constant branching). 
    
Throughout the paper, degenerate branching mechanisms are excluded. For a (sub)critical spatially constant mechanism this means
$$
\alpha+\beta+\pi((0,\infty))>0.
$$
For a spatially dependent mechanism $\Psi$, this means that, unless otherwise
specified,
$$
\alpha(x)+\beta(x)+\pi(x,(0,\infty))>0,
\qquad x\in\mathbb{R}^d.
$$
Throughout the paper, unless explicitly stated otherwise, all branching mechanisms under consideration are critical or subcritical. For $u>0$, the L\'evy--Khintchine representation gives
$$
\partial_u\Psi(x,u)
=
\alpha(x)+2\beta(x)u+
\int_0^\infty \lambda(1-e^{-\lambda u})\pi(x,\dd\lambda).
$$
Consequently, under the critical or subcritical assumption, $\Psi(x,\cdot)$ is non-decreasing on $[0,\infty)$, and it is
strictly increasing on $[0,\infty)$ under the above non-degeneracy condition.
    
	\subsection{Super-Brownian motion with general branching mechanism}
    Let $M_F=M_F(\mathbb{R}^d)$ be the space of finite measures on $(\mathbb{R}^d,\mathscr{B}(\mathbb{R}^d))$ furnished with the weak topology. For any measure $\mu\in M_F$, we use $S(\mu)$ to denote its topological support. We  use $M_c$ to denote the subspace of $M_F$ consisting of measures with compact support. We consider a $d$-dimensional standard Brownian motion $W=(\Omega,\mathscr{F},(\mathscr{F}_t)_{t\geq0},(W_t)_{t\geq0},P_x)$, and a general branching mechanism $\Psi$ given as before. Then, we can construct a time-homogeneous $M_F$-valued branching Markov process with spatial motion $W$ and branching mechanism $\Psi$, called a \textit{super-Brownian motion with a general branching mechanism} $\Psi$ (on $\mathbb{R}^d$). The existence of such superprocesses is well known. In the following, we will use $X=(\mathbb{D},\mathscr{D},(\mathscr{D}_{t+})_{t\geq 0}, (X_t)_{t\geq 0},(\mathbb{P}_\mu)_{\mu\in M_F})$ to denote the canonical Borel right realization of super-Brownian motion with a  general branching mechanism. Here $\mathbb{D}=\mathbb{D}([0,\infty),M_F)$ is the Skorokhod space, i.e., the family of all c\`{a}dl\`{a}g mappings from $[0,\infty)$ to $M_F(\mathbb{R}^d)$, which is endowed with the Skorokhod topology (see Ethier and Kurtz \cite{MR838085} for details of the theory of Skorokhod space). The process is defined as a map $X:\mathbb{D}\to M_F(\mathbb{R}^d)$ with $X_t(\omega)=\omega(t)$, $\mathscr{D}_t=\sigma\{X_s:0\leq s\leq t\}$, $\mathscr{D}=\bigvee \mathscr{D}_t$ and $\mathbb{P}_\mu$ is a probability measure on $(\mathbb{D},\mathscr{D})$ with its Laplace transform  satisfying
	\begin{equation}\label{cumu}
		\mathbb{P}_\mu(\exp\left<-f,X_t\right>)=\exp\left<-(V_tf),\mu\right>,
	\end{equation}
	where $f$ belongs to the space $\text{bp}\mathscr{B}(\mathbb{R}^d)$ of bounded measurable non-negative functions on $\mathbb{R}^d$. Here and in the following, we use $\left<v,\mu\right>$ or $\mu(v)$ to denote the integral of a measurable function $v$ with respect to a measure $\mu$. In the above equation, $(V_t)$ is the so-called cumulant semigroup of $X$.
	For any given $f$, $v(x,t):=(V_tf)(x)$ is characterized as the unique non-negative solution of the integral equation
	\begin{equation}\label{inteq}
	v(x,t)=P_x [f(W_t)]-P_x\left[\int_0^t \Psi(W_s,v(W_s,t-s))\mathrm{d}s\right].
	\end{equation}
    
	The integral equation \eqref{inteq} is the mild formulation of the formal semilinear equation
	\begin{equation}\label{deq}
		\left\{
		\begin{aligned}
			\dfrac{\partial v(x,t)}{\partial t}&= \dfrac{1}{2}\Delta v(x,t)-\Psi(x,v(x,t)),\quad t\geq 0, x\in\mathbb{R}^d;\\
			v(x,0)&=f(x),
		\end{aligned}
		\right.
	\end{equation}
	where $\Delta$ denotes the Laplacian, that is,
	$$\Delta:=\sum_{i=1}^d\dfrac{\partial^2}{\partial x_i^2}.$$

    For general theory of measure-valued branching processes and their deep connections with non-linear partial differential equations, one can also refer to Dawson \cite{Dawson93}, Dynkin \cite{MR1280712}, \cite{LeGall99}, and Perkins \cite{MR1915445}, where the existence and uniqueness of the mild solutions to the integral equation \eqref{inteq} can be found. In particular, we adopt the notation and formulation in Li \cite{Li22}.

	%\paragraph{Weighted Occupation time}
    The weighted occupation time method, first introduced by Iscoe  \cite{Iscoe86}, provides a powerful framework for analyzing the spatial spread of superprocesses. For a super-Brownian motion $X$, the weighted occupation time up to time $t$ is defined as a random measure $Y_t(\cdot)$, so that for any bounded measurable function $f: \mathbb{R}^d \to [0,\infty)$, 
	\begin{equation}
		Y_t(f) := \int_0^t \langle f, X_s \rangle ds.
	\end{equation}
	This functional captures the cumulative mass of the process weighted by the spatial function $f$ over time. A key feature is that the Laplace transform of $Y_t(f)$ is represented by the unique non-negative mild solution of an integral equation. Specifically, for $\theta > 0$, we have
    \begin{equation}\label{eq:weighted_occupation_laplace}
		\mathbb{P}_\mu\left[\exp\left(-\theta Y_t(f)\right)\right] = \exp\left(-\langle u_\theta(\cdot, t), \mu\rangle\right),
	\end{equation}
	where $u_\theta(x,t)$ is the unique non-negative mild solution, locally bounded on each finite time interval, of the following integral equation
\begin{equation}\label{eq:weighted_occupation_mild}
u_\theta(x,t)=P_x\left[\int_0^t\left\{
\theta f(W_s)-\Psi(W_s,u_\theta(W_s,t-s))\right\}\dd s\right].
\end{equation}
The formal
differential form of \eqref{eq:weighted_occupation_mild} is
\begin{equation}\label{eq:weighted_occupation_formal}
		\frac{\partial u_\theta}{\partial t} = \frac{1}{2}\Delta u_\theta - \Psi(x,u_\theta) + \theta f(x), \quad u_\theta(x,0) = 0.
	\end{equation}

    This representation allows one to translate probabilistic problems about the spatial spread of the superprocess into analytic problems concerning the solutions of these mild log-Laplace equations, providing a powerful tool for obtaining precise estimates on propagation speeds and support properties.
	We refer the reader to Iscoe \cite{Iscoe86} and Li \cite{Li22} for further details on weighted occupation times. 
    \subsection{Conditions on branching mechanisms}
    In this subsection, we impose several conditions on the general branching mechanisms. We first focus on the spatially constant branching mechanism denoted by $\Phi(u)$.
    \begin{condition}\label{con1}
			The branching mechanism $\Phi$ is (sub)critical, and satisfies the integral condition
			\begin{equation}\label{intcon}
				\int_\theta^\infty\left(\int_0^\lambda \Phi(u)\mathrm{d}u\right)^{-1/2}\mathrm{d}\lambda<\infty,\quad \textrm{for some } \theta>0.
			\end{equation}
\end{condition}
By the non-degeneracy convention above, every critical or subcritical
branching mechanism considered in Condition \ref{con1} is strictly increasing
on $(0,\infty)$. This monotonicity will help us in the application of comparison principles. Sheu \cite{Sheu94,SHEU1997129} proved that Condition \ref{con1} ensures that the super-Brownian motion with the branching mechanism $\Phi$ has the compact support property.  Specifically, it is easy to verify that the binary branching mechanism $\Phi(u)=\beta u^2$ and the stable branching mechanism $\Phi(u)=\beta u^{1+p}$ for $p\in(0,1)$ satisfy Condition \ref{con1}.

We formulate another important property, which is closely related to the criterion for extinction of continuous-state branching processes.

\begin{condition}[Grey's condition] \label{GreyCon}
The branching mechanism $\Phi$ is (sub)critical and satisfies the integral condition
\begin{equation}\label{Grey}
\int_\theta^\infty \dfrac{\mathrm{d}u}{\Phi(u)}<\infty,\quad \text{for some } \theta>0.
\end{equation}
\end{condition}
It is well known that Grey's condition \eqref{Grey} holds if and only if the $\Phi$-continuous state branching process becomes extinct with positive probability; see Grey \cite{Grey_1974} for further details. We also note that, if we further assume that the branching mechanism $\Phi$ is (sub)critical, then both the $\Phi$-continuous state branching process and the super-Brownian motion started from a finite measure $\mu$ with the branching mechanism $\Phi$ become extinct with probability $1$. 

It is easy to see that  Condition \ref{con1} implies Condition \ref{GreyCon}; see \cite{SHEU1997129}. 

Next, we turn to the spatially dependent case. We assume that the spatially dependent branching mechanism is locally bounded below by a family of spatially constant branching mechanisms in bounded domains. To formalize this extension, let $\Psi(x,u)$ be a general spatially dependent branching mechanism defined on $\mathbb{R}^d\times [0,\infty)$. We will always use $B(x,R)$ to denote the open ball centered at $x$ with radius $R$, and $\overline{B(x,R)}$ to denote the corresponding closed ball. We impose the following condition.
\begin{condition}\label{con_localize}
For any $R>0$, there exists a spatially constant (sub)critical branching mechanism $\Phi_R$ satisfying Condition \ref{con1}, such that
$$\Phi_R(u)\leq\Psi(x,u),\quad \forall x\in \overline{B(0,R)}, u\geq 0.$$
\end{condition}
A particular case is when the spatially 
dependent branching mechanism $\Psi$ is uniformly bounded below by a spatially constant branching mechanism $\Phi$:
\begin{condition}\label{con_general}
The branching mechanism $\Psi$ is bounded below by a (sub)critical spatially constant branching mechanism $\Phi(u)$, i.e.
\begin{equation}
\Phi(u)\leq \Psi(x,u),\quad \forall x\in\mathbb{R}^d, u\geq 0.
\end{equation}
and $\Phi(u)$ satisfies Condition \ref{con1}.
\end{condition}
The conditions introduced above will be used frequently in the statement of our results. Condition \ref{con1} is used to construct and estimate the boundary blow-up solution. Condition \ref{GreyCon} is used only for extinction. Conditions \ref{con_localize} and \ref{con_general} allow one to estimate super-Brownian motion  with spatially dependent branching mechanism using that with spatially dependent mechanism for which more explicit results are available. The relation between them is
$$
\text{Condition }\ref{con1}\implies\text{Condition }\ref{GreyCon},
\qquad
\text{Condition }\ref{con_general}\implies\text{Condition }\ref{con_localize}.
$$
Condition \ref{con_localize} involves a family of spatially constant lower bounds $\{\Phi_R\}_{R>0}$ satisfying Condition \ref{con1}, whereas Condition \ref{con_general} is the special case in which one may take $\Phi_R=\Phi$ for all $R>0$.

\subsection{The Keller integral}\label{subsec_Keller}
While Condition \ref{GreyCon} gives the criterion of extinction, Condition \ref{con1} serves as the criterion of existence of solutions to certain singular boundary value problem related to the superprocesses. 

Precisely, suppose that a spatially constant branching mechanism $\Phi$ satisfying Condition \ref{con1} is given. For any $R>0$ we consider the following singular boundary value problem
\begin{equation}\label{SBVP}
\left\{\begin{aligned}
\dfrac{1}{2}\Delta u_{R,\Phi}(x)&=\Phi(u_{R,\Phi}(x)),\quad x\in B(0,R); \\
u_{R,\Phi}(x)&\to+\infty,\quad as\ |x|\uparrow R.
\end{aligned}
\right.
\end{equation}

The study of such singular boundary value problems has a long history, with foundational work dating back to the seminal papers of Keller \cite{Keller57} and Osserman \cite{Osserman57}, who established the basic theory of boundary blow-up for nonlinear elliptic equations. In addition, they established the existence of a solution of \eqref{SBVP} under Condition \ref{con1}. In particular, under Condition \ref{con1}, it follows from Costin and Dupaigne \cite[Corollary 1.4]{CD10} that the solution of \eqref{SBVP} is unique. In addition, Sheu \cite[Lemma 2.3(C) and Theorem 2.3]{Sheu94}  showed that this unique solution of equation \eqref{SBVP} can be constructed through the logarithmic Laplace functional of the super-Brownian motion stopped when hitting the boundary $\partial B(0,R)$. Those results imply that under Condition \ref{con1}, $\eqref{SBVP}$ admits a radial solution, and we denote it by $u_{R,\Phi}$. For $r\in[0,R)$, we write $u_{R,\Phi}(r)$ for its common value on $\{x\in\mathbb{R}^d:\|x\|=r\}$.

In particular, following the work of Keller \cite{Keller57}, we can derive an upper-bound estimate of the solution of \eqref{SBVP}. First, assume that we are given a branching mechanism $\Phi$ satisfying Condition \ref{con1}.
Define  the Keller integral 
$$G_{\Phi}(z):=\int_z^\infty\dfrac{\dd s}{\sqrt{2\int_0^s \Phi(v)\dd v}},\quad z>0.$$
 Since $\Phi$ satisfies Condition \ref{con1}, $G_{\Phi}$ is finite and strictly decreasing. Let $G_{\Phi}^{-1}$ be its inverse function. It is easy to see that $G_{\Phi}^{-1}$ is a mapping from $(0,\infty)$ to $(0,\infty)$ with 
 $$
 \lim_{s\to0 }G_{\Phi}^{-1}(s)=\infty,\quad \lim_{s\to\infty}G_{\Phi}^{-1}(s)=0.
 $$
 From the proof in Keller \cite[Theorem I]{Keller57}, for any $R>|x|>0$,
\begin{align}\label{eq:inverseUpperBound}
  u_{R,\Phi}(x) \le G_{\Phi}^{-1}(R - |x|).  
\end{align}
\begin{example}\label{ex:keller_stable}
If the branching mechanism $\Phi$ is stable, that is,
$$
\Phi(u)=\beta u^{1+p},\quad u\geq 0,
$$
for some constants $\beta>0$ and $p\in(0,1]$, then $\Phi$ satisfies Condition
\ref{con1}. Indeed,
$$\int_0^s\Phi(v)\dd v=\frac{\beta}{p+2}s^{p+2}.$$
Therefore, for every $\theta>0$,
\begin{align*}
\int_\theta^\infty
\dfrac{\dd s}{\sqrt{\int_0^s\Phi(v)\dd v}}=\int_\theta^\infty\dfrac{\dd s}{\sqrt{\frac{\beta}{p+2}s^{p+2}}}=\sqrt{\frac{p+2}{\beta}}\int_\theta^\infty s^{-1-p/2}\dd s=\frac{2}{p}\sqrt{\frac{p+2}{\beta}}\theta^{-p/2}<\infty.
\end{align*}
Moreover, the Keller integral is given explicitly for $z>0$ by
\begin{align*}
G_{\Phi}(z)&=\int_z^\infty\dfrac{\dd s}{\sqrt{2\int_0^s\Phi(v)\dd v}}=\int_z^\infty\dfrac{\dd s}{\sqrt{\frac{2\beta}{p+2}s^{p+2}}}=\sqrt{\frac{p+2}{2\beta}}\int_z^\infty s^{-1-p/2}\dd s=\frac{2}{p}\sqrt{\frac{p+2}{2\beta}}z^{-p/2}.
\end{align*}
Consequently, if
$$A_{p,\beta}:=\frac{2}{p}\sqrt{\frac{p+2}{2\beta}},$$
then
$$G_{\Phi}(z)=A_{p,\beta}z^{-p/2},\quad z>0.$$
Solving $r=A_{p,\beta}z^{-p/2}$ for $z$, we obtain
$$
G_{\Phi}^{-1}(r)= A_{p,\beta}^{2/p}r^{-2/p}=
\left(\frac{2}{p}\sqrt{\frac{p+2}{2\beta}}\right)^{2/p}
r^{-2/p},
\quad r>0.
$$
\end{example}

\section{Main results}\label{mainre}
In this section, we state our main results on the propagation speed estimate of a super-Brownian motion with general branching mechanisms and give some direct corollaries.

Our first main result concerns  the propagation speed of the superprocess support. For a super-Brownian motion $X$ with a branching mechanism $\Psi$ satisfying the local lower bound Condition \ref{con_localize}, we give an upper estimate of the probability of the event that $X$ visits the complement of the $R$-neighborhood of the support of its initial measure during the time period $[0,t]$. The global lower bound and spatially constant cases are recovered by taking the lower bound family to be independent of the radius, and in particular this generalizes \cite[Corollary 1.5]{dawson1994almost}, in which the $d$-dimensional super-Brownian motions with spatially constant stable branching mechanism are considered.

{For any $A\subset\mathbb{R}^d$ and $\varepsilon>0$, denote by
$$A^\varepsilon:=\{x\in\mathbb{R}^d:\operatorname{dist} (x,A)\leq \varepsilon\}
$$
the $\varepsilon$-neighborhood of the set $A$, where here $\operatorname{dist}$ denotes the distance function in the Euclidean space.} For each $\mu\in M_c$, define
\begin{equation}\label{defrho}
\rho(\mu):=\inf\{r>0: S(\mu)\subset B(0,r)\}.
\end{equation}
 \begin{theorem}\label{th1}
Given a super-Brownian motion $X$ with a branching mechanism $\Psi$ satisfying Condition \ref{con_localize}, for any $\mu\in M_c$, $0<R_1<R$ and $0<t<\dfrac{R_1^2}{d}$, we have
$$
\begin{aligned}
&\mathbb{P}_\mu\left\{X_u\left((S(\mu)^R)^c\right) > 0 \text{ for some } u \leq t\right\} \\
&\quad \leq \mu(1) \cdot u_{R,\Phi_{\rho(\mu)+R}}(R_1) \cdot C(d) \left(\frac{R_1}{\sqrt{t}}\right)^{d-2} \exp\left(-\frac{R_1^2}{2t}\right) \\
&\quad \leq \mu(1) \cdot G_{\Phi_{\rho(\mu)+R}}^{-1}(R-R_1) \cdot C(d) \left(\frac{R_1}{\sqrt{t}}\right)^{d-2} \exp\left(-\frac{R_1^2}{2t}\right),
\end{aligned}
$$
where $C(d)$ is a constant depending only on dimension $d$, and $u_{R,\Phi_{\rho(\mu)+R}}$ denotes the unique radial classical solution of singular boundary value problem \eqref{SBVP} in $B(0,R)$ with $\Phi$ replaced by $\Phi_{\rho(\mu)+R}$. If $\Psi$ satisfies Condition \ref{con_general} with lower bound $\Phi$, then this estimate reduces to the same bound with $u_{R,\Phi_{\rho(\mu)+R}}$ and $G_{\Phi_{\rho(\mu)+R}}^{-1}$ replaced by $u_{R,\Phi}$ and $G_\Phi^{-1}$, respectively. In particular, the spatially constant case is obtained by taking $\Psi(x,u)=\Phi(u)$.
\end{theorem}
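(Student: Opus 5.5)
We follow Iscoe's weighted occupation time method, as adapted by Dawson--Vinogradov, and localize it so that only the lower bound $\Phi:=\Phi_{\rho(\mu)+R}$ enters. The first step is a reduction by branching. Since $X$ under $\mathbb{P}_\mu$ is infinitely divisible and the log-Laplace functionals are additive in $\mu$, the problem reduces to point masses $\delta_x$ with $x\in S(\mu)$. Indeed, the exit event for $\mu$ is contained in the union over $x\in S(\mu)$ of the events that the ``$\delta_x$-part'' leaves $B(x,R)$. Analytically, if $v(x)=-\log\mathbb{P}_{\delta_x}\{\text{no exit}\}$ (and more generally $v\ge0$ is a log-Laplace functional), then
\[
\mathbb{P}_\mu\{\cdots\}\le 1-e^{-\langle v,\mu\rangle}\le \langle v,\mu\rangle\le \mu(1)\sup_{x\in S(\mu)} v(x).
\]
So it suffices to bound, uniformly in $x\in S(\mu)$, the quantity $-\log\mathbb{P}_{\delta_x}\{X_u(B(x,R)^c)=0\ \forall u\le t\}$.

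The second step identifies this quantity with a limit of occupation-time functionals. Because paths are càdlàg, exit before time $t$ forces positive occupation time of $B(x,R')^c$ on $[0,t+\epsilon]$ for every $R'<R$ (alternatively, we use the standard fact for Borel right superprocesses). Set $f=\mathbf 1_{B(x,R)^c}$ and $u_\theta$ as in \eqref{eq:weighted_occupation_mild}. Then
\[
-\log\mathbb{P}_{\delta_x}\{Y_t(f)=0\}=\lim_{\theta\to\infty}u_\theta(x,t)=:u_\infty(x,t).
\]
Now comes the comparison, which is the heart of the argument. Translating so that $x=0$, we have $B(0,R)\subset B(0,\rho(\mu)+R)$ after undoing the shift, so $\Psi\ge\Phi$ there. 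We use the strong Markov property at $\tau=\inf\{s:|W_s|\ge R_1\}$ in the mild equation \eqref{eq:weighted_occupation_mild}. Since $f=0$ on $B(0,R)$ and $\Psi\ge0$,
\[
u_\theta(0,t)\le P_0\big[\mathbf 1_{\{\tau\le t\}}\,u_\theta(W_\tau,t-\tau)\big].
\]
Next we bound $u_\theta(y,s)\le u_{R,\Phi}(|y|)$ for $|y|<R$, uniformly in $\theta$ and $s$. The function $u_{R,\Phi}$ blows up on $\partial B(0,R)$ and solves $\tfrac12\Delta u=\Phi(u)\le\Psi(\cdot,u)$ in $B(0,R)$, where $f=0$. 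It is therefore a stationary supersolution of the equation for $u_\theta$ on $B(0,R)$, with boundary value $+\infty\ge u_\theta$ and initial value $\ge0$. A mild comparison principle, proved by Feynman--Kac with the monotonicity of $\Psi(x,\cdot)$ and run on $B(0,R-\epsilon)$ before letting $\epsilon\to0$, then gives the bound. This step is the main obstacle. The solutions are only mild and the supersolution is singular at the boundary, so one must work on $B(0,R-\epsilon)$, where $u_{R,\Phi}$ is bounded and classical, and control $u_\theta$ on the boundary. The control is $u_\theta\le u_{R-\epsilon,\Phi}$-type monotonicity in the radius, together with $u_{R-\epsilon,\Phi}\downarrow u_{R,\Phi}$ by uniqueness (Costin--Dupaigne). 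We write the difference $w=u_\theta-u_{R,\Phi}$ as a mild subsolution of a linear equation with nonnegative potential $(\Psi(u_\theta)-\Psi(u_{R,\Phi}))/(u_\theta-u_{R,\Phi})$, killed at the exit of the smaller ball.

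With this bound in hand, using $|W_\tau|=R_1$ and the radial symmetry of $u_{R,\Phi}$, we get
\[
u_\infty(0,t)\le u_{R,\Phi}(R_1)\,P_0\big(\sup_{s\le t}|W_s|\ge R_1\big).
\]
The final step is the Gaussian tail estimate. The standard bound for the $d$-dimensional Brownian maximum, proved via the Bessel process or by reflection on each coordinate's radial part, gives
\[
P_0\big(\sup_{s\le t}|W_s|\ge R_1\big)\le C(d)\,(R_1/\sqrt t)^{d-2}e^{-R_1^2/2t}
\qquad\text{for } t<R_1^2/d.
\]
This restriction on $t$ ensures that the polynomial prefactor dominates the lower-order terms in the $\chi_d$ tail. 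Combining the three steps yields the first inequality. The second follows from \eqref{eq:inverseUpperBound} with $|x|=R_1$. The Condition \ref{con_general} and spatially constant cases follow by taking $\Phi_R\equiv\Phi$.
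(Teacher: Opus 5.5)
Your argument is correct in outline, with one step whose justification needs replacing (second paragraph). It shares the paper's analytic core: the weighted occupation-time cumulant, the strong Markov splitting at the exit from the radius-$R_1$ ball where the source vanishes, domination by the Keller blow-up solution through a mild comparison on a slightly smaller ball, and the Brownian exit estimate. Where you differ is in passing from $\mu$ to single points.

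The paper proves the bound for $\mu$ carried by $\overline{B(0,\rho(\mu))}$ with a ball centred at the origin (Proposition \ref{prop1}), then shifts it (Corollary \ref{corotrans}). It covers $S(\mu)$ by finitely many $\delta$-balls, splits $\mu=\sum_i\mu_i$ by the branching property, pays exit distance $R_1-\delta$, and lets $\delta\downarrow0$. You instead fix $f=\mathbf 1_{(S(\mu)^R)^c}$ and use $1-e^{-\langle v,\mu\rangle}\le\langle v,\mu\rangle$. You then bound $v(x)$ for each $x\in S(\mu)$ by monotonicity in $f$, since $(S(\mu)^R)^c\subset\overline{B(x,R)}^c$ and $\overline{B(x,R)}\subset\overline{B(0,\rho(\mu)+R)}$. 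Evaluating at the centre of a ball around each support point gives exit distance exactly $R_1$, with no covering and no $\delta$-limit. State this monotonicity explicitly; the ``$\delta_x$-part'' union is only a heuristic. In exchange, the paper's route produces Proposition \ref{prop1}, which it reuses for Theorem \ref{th:csp_unify}.

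There are also smaller differences. You drop $-\Psi\le0$ directly after the strong Markov split, instead of going through the Feynman--Kac form with potential $q_\theta$. You linearize the comparison with the bounded nonnegative potential $(\Psi(\cdot,u_\theta)-\Psi(\cdot,U))/(u_\theta-U)$, rather than the paper's $H(x,z)=Lz-\Psi(x,z)$ plus Gronwall. Your $t+\epsilon$ device handles the endpoint $u=t$, which the paper's ramp $\psi_R$ passes over. Since $\overline{B(x,R)}^c$ is open, you may take $f=\mathbf 1_{\overline{B(x,R)}^c}$. That lets you skip the $R'<R$ approximation and the continuity of $u_{R',\Phi}(R_1)$ in $R'$.

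The step whose stated justification fails is the boundary control in the barrier comparison. To compare on an interior ball $B(0,r)$, you need $u_\theta\le u_{R,\Phi}(r)$ on the sphere $|y|=r$ for all times up to $T$. Monotonicity of $u_{r,\Phi}$ in $r$ does not give this. The inequality ``$u_\theta\le u_{R-\epsilon,\Phi}$'' on $\partial B(0,R-\epsilon)$ is vacuous, because $u_{R-\epsilon,\Phi}=+\infty$ there, and comparing with $u_{R-\epsilon,\Phi}$ merely moves the singular boundary.

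What closes the step is the a priori bound from the mild equation, $0\le u_\theta(y,s)\le\theta s\le\theta T$, which holds because $f\le1$ and $\Psi\ge0$. For fixed $\theta$ and $T$, pick $r\in(R_1,R)$ with $u_{R,\Phi}(r)>\theta T$, which exists by boundary blow-up, and compare on $B(0,r)$. The radius $r$ depends on $\theta$, but the resulting bound $u_{R,\Phi}(|y|)$ does not, and that is all the limit $\theta\to\infty$ needs.
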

The proof of Theorem \ref{th1} is given in Subsection \ref{subsec:proof_th1}.

\begin{remark}
For spatially constant super-Brownian motion with branching mechanism $\Phi(u)=\beta u^{1+p}$, the Keller integral is given by Example \ref{ex:keller_stable}, and by Theorem \ref{th1} the upper bound of the propagation probability is given explicitly by
\begin{align}
\mathbb{P}_\mu\left\{X_u\left((S(\mu)^R)^c\right) > 0 \text{ for some } u \leq t\right\} \le \mu(1) \cdot C_{p,\beta} \cdot (R-R_1)^{-2/p} \left(\frac{R_1}{\sqrt{t}}\right)^{d-2} \exp\left(-\frac{R_1^2}{2t}\right).
\end{align}
Optimizing the upper bound, we have  for small $t>0$,
\begin{align}
 \mathbb{P}_\mu\left\{X_u\left((S(\mu)^R)^c\right) > 0 \text{ for some } u \leq t\right\} \le \widetilde{C}(p,d,\beta) \cdot \mu(1) \cdot \left(\frac{R}{t}\right)^{2/p} \left(\frac{R}{\sqrt{t}}\right)^{d-2} \exp\left(-\frac{R^2}{2t}\right).
\end{align}
Note that the above expression has exactly the same decay rate as that in Dawson and Vinogradov \cite[Corollary 1.5]{dawson1994almost}. Hence, Theorem \ref{th1} generalizes the speed estimate given by Dawson and Vinogradov.
\end{remark}

After establishing this upper-bound estimate, we can apply this quantitative tool to investigate the compact support property, particularly extending it to processes with spatially dependent branching mechanisms. 

We define the range of a superprocess $X$ as the smallest closed set $\mathscr{R}(X)$ in $\mathbb{R}^d$ such that $S(X_t)\subset \mathscr{R}(X)$ for every $t\geq0$. {Formally, the range of $X$ can be expressed by
			$$\mathscr{R}(X):=\overline{\bigcup_{t\geq 0}S(X_t)}$$
			where $\overline{A}$ denotes the closure of $A\in\mathscr{B}(\mathbb{R}^d)$. }
Let $M_c=M_c(\mathbb{R}^d)$ be the family of finite measures on $\mathbb{R}^d$ with compact support. We say that $X$ exhibits the \textit{compact support property} if for any finite initial measure $\mu\in M_c(\mathbb{R}^d)$, and $t>0$, we have
		$$\mathbb{P}_\mu\left\{\overline{\bigcup_{s\leq t}S(X_s)} \text{ is compact}\right\}=1.$$
		
Historically, the compact support property for super-Brownian motion with a spatially constant branching mechanism was thoroughly investigated by Sheu \cite{Sheu94,SHEU1997129}. For spatially dependent settings, Engländer and Pinsky \cite{JP99,JP06} made significant progress by translating the compact support property into qualitative uniqueness criteria for non-negative solutions of associated semi-linear PDEs via $h$-transform techniques. 

However, earlier approaches often required the assumption of specific polynomial forms (such as $\alpha(x)u+\beta(x)u^{1+p}$) for spatially dependent mechanisms. Using the propagation estimates obtained from weighted occupation times and local parabolic barriers, we can provide a direct alternative verification of this property. More importantly, the method does not rely on exact spatial independence. We formulate the compact-support criterion directly for spatially dependent mechanisms with local lower bounds. The case of a global uniform lower bound, and in particular, the spatially constant case, is then recovered as special cases. Assume that the branching mechanism $\Psi$ satisfies Condition \ref{con_localize}. Under this condition, the propagation speed estimate still holds; its proof uses the spatially dependent local barrier collected in the Appendix \ref{app_1}. As a consequence, we obtain a sufficient condition for the compact support property.
 \begin{theorem}\label{th:csp_unify}
Let $X$ be a super-Brownian motion with a spatially dependent branching mechanism $\Psi$ satisfying Condition \ref{con_localize} with a local lower bound family $\{\Phi_R\}$ of branching mechanisms satisfying Condition \ref{con1}. A sufficient condition for $X$ to possess the compact support property in finite time, i.e., for any $t>0$ and $\mu\in M_c$,  
\begin{align}
\mathbb{P}_\mu\left\{ \overline{\bigcup_{s \le t} S(X_s)} \text{ is compact} \right\} = 1,
\end{align}
is given by
\begin{align}\label{eq:sufficientCon}
\liminf_{R\to\infty}\dfrac{\ln G_{\Phi_R}^{-1}(R/2)}{R^2}<\infty.
\end{align}
Equivalently, along some sequence $R_n\to\infty$, the quantity $G_{\Phi_{R_n}}^{-1}(R_n/2)$ grows at most like $\exp(C R_n^2)$ for some finite $C$.
\end{theorem}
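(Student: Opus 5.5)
We deduce the compact support property from Theorem \ref{th1}, applied along the sequence $R_n\to\infty$ furnished by \eqref{eq:sufficientCon}. Fix $\mu\in M_c$ and $t>0$. The event that $\overline{\bigcup_{s\le t}S(X_s)}$ is not compact is contained in
\[
\bigcap_{n}\Bigl\{X_u\bigl((S(\mu)^{R_n})^c\bigr)>0 \text{ for some } u\le t\Bigr\}.
\]
Indeed, if the closed union is unbounded, then for every $n$ some $X_u$ with $u\le t$ charges the complement of the bounded set $S(\mu)^{R_n}$. Hence it suffices to show that the probability of the $n$-th event tends to $0$ along a subsequence.

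The difficulty is that Theorem \ref{th1} requires $t<R_1^2/d$, and the relevant lower bound is $\Phi_{\rho(\mu)+R}$ rather than $\Phi_R$. We therefore apply Theorem \ref{th1} with radius $R=R_n-\rho(\mu)$ (assume $R_n>2\rho(\mu)$). Then the mechanism appearing is exactly $\Phi_{R_n}$, and the event concerns $(S(\mu)^{R_n-\rho(\mu)})^c\supset (S(\mu)^{R_n})^c$, so the reduction above still applies. Choose $R_1=R-R_n/2=R_n/2-\rho(\mu)$. This gives $R-R_1=R_n/2$, and for large $n$ the condition $t<R_1^2/d$ holds. Theorem \ref{th1} then yields
\[
\mathbb{P}_\mu\{\cdots\}\le \mu(1)\,C(d)\,G^{-1}_{\Phi_{R_n}}(R_n/2)\Bigl(\frac{R_1}{\sqrt t}\Bigr)^{d-2}\exp\Bigl(-\frac{R_1^2}{2t}\Bigr).
\]

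By \eqref{eq:sufficientCon}, along a further subsequence we have $G^{-1}_{\Phi_{R_n}}(R_n/2)\le \exp(CR_n^2)$, while $R_1^2/(2t)=(R_n/2-\rho(\mu))^2/(2t)\sim R_n^2/(8t)$. The logarithm of the bound is therefore
\[
\le CR_n^2-\frac{R_n^2}{8t}(1+o(1))+O(\ln R_n),
\]
which tends to $-\infty$ only if $t<1/(8C)$. This restriction to small times is the main obstacle: the Gaussian decay beats the growth of $G^{-1}$ only for short times.

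To reach arbitrary $t$, we use the Markov property and iterate. Fix $t_0<1/(8C)$. By the above, $\mathbb{P}_\nu\{\overline{\bigcup_{s\le t_0}S(X_s)}\text{ compact}\}=1$ for every $\nu\in M_c$, since the argument works uniformly for any compactly supported initial measure (only $\rho(\nu)$ and $\nu(1)$ enter, and these are fixed). On that event $X_{t_0}\in M_c$. Applying the Markov property at times $t_0,2t_0,\dots$, we conclude that almost surely each $X_{kt_0}\in M_c$ and each block range $\overline{\bigcup_{kt_0\le s\le (k+1)t_0}S(X_s)}$ is compact. A finite union of these blocks covers $[0,t]$, so the range up to time $t$ is compact almost surely.

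One check remains: under Condition \ref{con_localize} the smallness threshold $t_0$ depends only on the constant $C$ from \eqref{eq:sufficientCon}, not on the initial measure. This holds because $\rho(\nu)$ only shifts $R_1$ by a fixed amount, which does not affect the leading $R_n^2/(8t)$ asymptotics. The final "equivalently" clause is immediate by taking logarithms in \eqref{eq:sufficientCon}.
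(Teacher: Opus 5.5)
Your proposal is correct and follows essentially the same route as the paper. The paper applies the origin-centred ball estimate of Proposition \ref{prop1} with $R_1=R/2$, whereas you apply Theorem \ref{th1} with radius $R_n-\rho(\mu)$ and $R_1=R_n/2-\rho(\mu)$; both give the identical bound $\mu(1)C(d)\,G^{-1}_{\Phi_{R_n}}(R_n/2)\,\bigl((R_n/2-\rho(\mu))/\sqrt{t}\bigr)^{d-2}\exp\bigl(-(R_n/2-\rho(\mu))^2/(2t)\bigr)$, and the remaining steps (a short time $t_0$ depending only on the constant in \eqref{eq:sufficientCon}, then Markov-property iteration over blocks of length $t_0$) match the paper's argument.
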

The proof of Theorem \ref{th:csp_unify} is given at the beginning of Section \ref{sec4}.
\begin{remark}The appearance of $G_{\Phi_R}^{-1}$ in the above sufficient condition is due to $u_{R,\Phi_R}(R/2)\leq G_{\Phi_R}^{-1}(R/2)$ for large $R$.
\end{remark}

\begin{coro}\label{coro:regularVarying}
Let $X$ be a super-Brownian motion with  spatially dependent (sub)critical branching mechanism $\Psi$ satisfying \begin{equation}\label{eq:rv_lower_bound}
\Psi(x,u)\geq \beta(x)\Phi(u),
\qquad x\in\mathbb{R}^d,\quad u\geq0,
\end{equation}
where $\beta$ is a positive bounded continuous function and $\Phi$ is
regularly varying at infinity with index $1+p$ for $p\in(0,1]$, that is,
$$
\Phi(u)=u^{1+p}L(u),
\qquad u\to\infty,
$$
for a slowly varying function $L$ at infinity.
 Set $\beta_R:=\inf_{x\in\overline{B(0,R)}}\beta(x).$
  Then $X$ has the finite-time compact support property provided that
\begin{equation}\label{eq:rv_gaussian_condition}
\limsup_{R\to\infty}\frac{\ln \beta_R}{R^2}>-\infty.
\end{equation}
%Let $\Phi$ be a spatially constant (sub)critical branching mechanism satisfying
%Condition \ref{con1}. Assume that $\Phi$ is regularly varying at infinity with
%index $1+p$ for $p\in(0,1]$, that is,
%$$
%\Phi(u)=u^{1+p}L(u),
%\qquad u\to\infty,
%$$
%where $L$ is slowly varying at infinity. Consider a spatially dependent
%branching mechanism $\Psi$ satisfying
\end{coro}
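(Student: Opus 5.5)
The plan is to reduce Corollary \ref{coro:regularVarying} to Theorem \ref{th:csp_unify}. We build an explicit local lower-bound family $\{\Phi_R\}$ for Condition \ref{con_localize} and then check \eqref{eq:sufficientCon}.

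\emph{Step 1: the lower-bound family.} Since $\beta$ is positive and continuous, $\beta_R>0$ for each $R$, and \eqref{eq:rv_lower_bound} gives
$$\Psi(x,u)\ge \beta_R\Phi(u),\qquad x\in\overline{B(0,R)},\ u\ge0.$$
So the natural choice is $\Phi_R:=\beta_R\Phi$. We must verify that $\Phi$ is itself a (sub)critical, nondegenerate branching mechanism. Implicitly it is of Lévy--Khintchine form; if the statement only gives regular variation, I would read it that way, since otherwise $\Phi_R$ is not a branching mechanism.

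We then check Condition \ref{con1} for $\Phi$. By Karamata's theorem,
$$\int_0^s\Phi\sim \frac{s^{2+p}L(s)}{2+p}.$$
Hence the integrand in \eqref{intcon} is $s^{-1-p/2}L(s)^{-1/2}$ up to a constant. This is integrable at infinity because $p>0$ and $L^{-1/2}$ is slowly varying (Potter bounds). Thus each $\Phi_R$ satisfies Condition \ref{con1}, and Condition \ref{con_localize} holds.

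\emph{Step 2: scaling of the Keller integral.} Directly from the definition, $G_{c\Phi}(z)=c^{-1/2}G_\Phi(z)$, so
$$G_{\Phi_R}^{-1}(r)=G_\Phi^{-1}\bigl(\sqrt{\beta_R}\,r\bigr).$$
We need $\ln G_\Phi^{-1}(\sqrt{\beta_R}R/2)=O(R^2)$ along a sequence $R_n\to\infty$. Since $G_\Phi^{-1}$ is decreasing, an upper bound for its argument from below suffices. Hypothesis \eqref{eq:rv_gaussian_condition} gives a sequence $R_n\to\infty$ and $c>0$ with $\beta_{R_n}\ge e^{-cR_n^2}$. Then
$$s_n:=\sqrt{\beta_{R_n}}R_n/2\ge e^{-cR_n^2/2}R_n/2.$$
Two cases arise. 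If $s_n$ stays bounded below, then $G_\Phi^{-1}(s_n)$ is bounded and we are done. Otherwise we need the small-argument asymptotics of $G_\Phi^{-1}$.

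\emph{Step 3: asymptotics of $G_\Phi^{-1}$ near $0$; this is the main obstacle.} From Step 1, $G_\Phi(z)=z^{-p/2}\ell(z)$ with $\ell$ slowly varying. Therefore $G_\Phi^{-1}(s)=s^{-2/p}\tilde\ell(1/s)$ as $s\to0$, with $\tilde\ell$ slowly varying, by the de Bruijn conjugate/inversion of regularly varying functions. Potter bounds then give $G_\Phi^{-1}(s)\le C s^{-2/p-1}$ for small $s$, so
$$\ln G_\Phi^{-1}(s_n)\le C'+\Bigl(\tfrac{2}{p}+1\Bigr)\Bigl(\tfrac{cR_n^2}{2}+\ln\tfrac{2}{R_n}\Bigr)=O(R_n^2).$$
Dividing by $R_n^2$ gives a finite $\liminf$ in \eqref{eq:sufficientCon}, and Theorem \ref{th:csp_unify} yields the compact support property.

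The care needed is in making the Karamata and inversion step rigorous with only regular variation at infinity. If that becomes delicate, I would bypass exact asymptotics. I would use only the crude bound $\Phi(u)\ge u^{1+p/2}$ for large $u$, which follows from Potter bounds. A direct comparison of the Keller integrals then gives $G_\Phi(z)\le C z^{-p/4}$ for large $z$. This yields polynomial growth of $G_\Phi^{-1}(s)$ as $s\to0$, which is all that is needed.
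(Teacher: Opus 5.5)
Your proposal is correct and follows the paper's proof essentially step for step: the choice $\Phi_R=\beta_R\Phi$, the scaling $G_{\Phi_R}^{-1}(R/2)=G_\Phi^{-1}(R\sqrt{\beta_R}/2)$, the split according to whether $R_n\sqrt{\beta_{R_n}}$ stays bounded away from $0$, and a Potter bound for $G_\Phi^{-1}$ near $0$ (you fix the exponent $2/p+1$ where the paper keeps $2/p+\varepsilon$). Your explicit Karamata check that $\Phi$ satisfies Condition~\ref{con1}, which the paper only asserts, and your fallback via $\Phi(u)\ge u^{1+p/2}$, which avoids inverting regularly varying functions, are both sound additions but not needed.
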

The proof of Corollary \ref{coro:regularVarying} is given in Subsection \ref{sec:proof_regular_varying}. This 
corollary yields the compact support property
whenever the local minimum branching rate does not decay faster than Gaussian
order along some unbounded sequence of radii.

 In the following, we compare condition \eqref{eq:rv_gaussian_condition} with   
the 
compact-support criteria of Engl\"{a}nder--Pinsky \cite{JP99,JP06} and Ren
\cite{Ren04} for super-Brownian motions with spatially dependent stable branching mechanisms, that is, 
$$\Psi(x,u)=\beta(x)u^{1+p},\quad u\geq 0,x\in\mathbb{R}^d$$
where $\beta(x)>0$ is continuous and bounded, and $p\in(0,1]$.
Their results are formulated through qualitative uniqueness criteria for associated semilinear PDEs and, in regular model classes, identify the
Gaussian spatial-decay threshold. More precisely, a sufficient condition that appears in 
Engl\"ander--Pinsky \cite{JP99,JP06} and Ren \cite{Ren04} is the following. 
\begin{align}\label{con:ren}
\text{for some
constant } M>0, \beta(x)\geq \exp(-M|x|^2) \text{for all sufficiently large } |x|.
\end{align}

We first show that \eqref{con:ren} implies the local-minimum condition \eqref{eq:rv_gaussian_condition}. Choose $R_0>0$ such that the above bound holds
whenever $|x|\geq R_0$. Since $\beta$ is positive and continuous,
$$
m_0:=\inf_{|x|\leq R_0}\beta(x)>0.
$$
For every $R\geq R_0$ and every $x\in\overline{B(0,R)}$, either
$|x|\leq R_0$, in which case $\beta(x)\geq m_0$, or
$R_0<|x|\leq R$, in which case
$$
\beta(x)\geq \exp(-M|x|^2)\geq \exp(-MR^2).
$$
Consequently,
$$
\beta_R
=
\inf_{x\in\overline{B(0,R)}}\beta(x)
\geq
\min\{m_0,\exp(-MR^2)\}.
$$
For all sufficiently large $R$, we have $\exp(-MR^2)\leq m_0$, and hence
$$
\beta_R\geq \exp(-MR^2).
$$
Therefore,
$$
\limsup_{R\to\infty}\frac{\ln\beta_R}{R^2}\geq -M>-\infty.
$$
Thus, condition \eqref{eq:rv_gaussian_condition} follows from \eqref{con:ren}.

%and Corollary \ref{coro:regularVarying} yields the compact support property.

In contrast, for certain irregular functions $\beta$,  condition \eqref{eq:rv_gaussian_condition} is satisfied, but condition \eqref{con:ren} is not fulfilled.
The reason is that condition \eqref{eq:rv_gaussian_condition} is
subsequential in the radius variable, whereas \eqref{con:ren}  is pointwise and eventual in the spatial variable. We make this
distinction explicit by the following example.

Let $a_1>1$, define recursively
$$
a_{n+1}=a_n^4,
\qquad
b_n=a_n^2,
\qquad n\geq1.
$$
Then
$$
a_n<b_n<a_{n+1},
\qquad
\frac{b_n}{a_n}=a_n\to\infty.
$$
Choose a continuous non-decreasing function $h:[0,\infty)\to[0,\infty)$
such that $h(r)=b_1^2$ on $[0,b_1]$, such that
$h(r)=b_n^2$ on $[a_n,b_n]$ for every $n\geq2$, and such that $h$ is
linearly interpolated on each interval $[b_n,a_{n+1}]$. Define
$$
\beta(x):=\exp(-h(|x|)),
\qquad x\in\mathbb{R}^d.
$$
Then $\beta$ is positive, bounded, continuous, and radially non-increasing.
Consequently,
$$
\beta_R=\inf_{|x|\leq R}\beta(x)=
\exp(-h(R)).
$$
Taking $R=b_n$, we obtain
$$
\beta_{b_n}=\exp(-h(b_n))=
\exp(-b_n^2),$$
and hence
$$
\frac{\ln\beta_{b_n}}{b_n^2}=-1.
$$
Therefore,
$$
\limsup_{R\to\infty}\frac{\ln\beta_R}{R^2}\geq -1>-\infty.
$$
Thus the condition \eqref{eq:rv_gaussian_condition} is satisfied.

On the other hand, the classical pointwise Gaussian lower bound fails.
Indeed, at $r=a_n$ we have
$$
\beta(a_n e_1)=\exp(-h(a_n))=\exp(-b_n^2)=\exp(-a_n^4),
$$
where $e_1$ is any fixed unit vector. Hence
$$
\frac{\ln\beta(a_n e_1)}{a_n^2}=-a_n^2
\to -\infty.
$$
It follows that there are no constants $0<M<\infty$ and no $R_0>0$ such
that
$$ \beta(x)\geq \exp(-M|x|^2),
\qquad |x|\geq R_0.$$
Hence, condition \eqref{con:ren} fails. 

%Thus, even in the stable case
%$$
%\Psi(x,u)=\beta(x)u^{1+p},
%\qquad p\in(0,1],
%$$
%our criterion applies to spatially irregular branching rates which are not
%covered by the classical eventual pointwise Gaussian lower bound. In regular radial models, such as
%$\beta(x)\asymp\exp(-c|x|^\gamma)$, both conditions identify the same
%Gaussian order threshold, but the present formulation is genuinely more
%flexible for irregular profiles.

We next record two compact-support consequences of Theorem
\ref{th:csp_unify}. Moreover, since the lower bound branching mechanism $\Phi$ satisfies  Condition \ref{GreyCon} automatically, the global range of $X$ is also compact.  
\begin{coro}\label{coro2}
If the branching mechanism $\Psi$ of a super-Brownian motion $X$ satisfies Condition \ref{con_general}, then for any $\mu\in M_c$ and $t>0$,
			$$\mathbb{P}_\mu\left\{ \overline{\bigcup_{s \le t} S(X_s)} \text{ is compact} \right\} = 1.$$
 Moreover,
$$\mathbb{P}_{\mu}\{\mathscr{R}(X)\text{ is compact}\}=1.$$
\end{coro}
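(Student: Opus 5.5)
The first assertion of Corollary \ref{coro2} follows from Theorem \ref{th:csp_unify}. Under Condition \ref{con_general} we may take $\Phi_R=\Phi$ for every $R>0$, so Condition \ref{con_localize} holds with a constant family. The sufficient condition \eqref{eq:sufficientCon} then asks for
$$\liminf_{R\to\infty}\frac{\ln G_\Phi^{-1}(R/2)}{R^2}<\infty .$$
Since $G_\Phi^{-1}(s)\to 0$ as $s\to\infty$, we have $\ln G_\Phi^{-1}(R/2)\to-\infty$, and the liminf is at most $0$. Hence for every $t>0$ and $\mu\in M_c$ the set $\overline{\bigcup_{s\le t}S(X_s)}$ is compact almost surely.

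For the range we combine this finite-time statement with extinction. The total mass $X_t(1)$ is bounded above by a $\Phi$-continuous state branching process started from $\mu(1)$. To see this, take constant $f\equiv\lambda$ in \eqref{inteq}. The comparison principle for the mild equation, using $\Psi(x,u)\ge\Phi(u)$ and the monotonicity of $\Phi$, gives $V_t\lambda\ge v_t(\lambda)$, where $v_t$ solves $\dot v=-\Phi(v)$, $v_0=\lambda$. Letting $\lambda\to\infty$ yields
$$\mathbb{P}_\mu\{X_t(1)=0\}=\exp\Bigl(-\mu(1)\lim_{\lambda\to\infty}V_t\lambda\Bigr)\ge \exp\bigl(-\mu(1)\bar v_t\bigr),$$
where $\bar v_t=\lim_{\lambda}v_t(\lambda)$. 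Condition \ref{con1} implies Grey's condition \ref{GreyCon}, so $\bar v_t<\infty$ for every $t>0$. Moreover $\bar v_t\downarrow 0$ as $t\to\infty$, because $\Phi$ is (sub)critical and strictly increasing with $\Phi(0)=0$. Therefore $\mathbb{P}_\mu\{X_t=0\}\to1$.

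Let $\tau=\inf\{t:X_t=0\}$. Extinction at a single time is absorbing, since the zero measure is a trap: $V_t0=0$. Thus $\tau<\infty$ almost surely. On the event $\{\tau\le n\}$, we have $\mathscr{R}(X)=\overline{\bigcup_{s\le n}S(X_s)}$, which is compact almost surely by the first part. Taking the union over $n\in\mathbb{N}$ gives $\mathbb{P}_\mu\{\mathscr{R}(X)\text{ compact}\}=1$.

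The main obstacle is the rigorous comparison $V_t\lambda\ge v_t(\lambda)$ in the mild (rather than classical) sense, together with the limit $\lambda\to\infty$. I would handle it with a Gronwall-type argument on the difference of the two mild equations. This uses that $\Psi(x,\cdot)$ and $\Phi$ are locally Lipschitz and increasing. One then passes to the limit by monotonicity of $V_t\lambda$ in $\lambda$.
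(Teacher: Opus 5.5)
Your proposal is correct and follows essentially the same route as the paper. You obtain the finite-time statement from Theorem \ref{th:csp_unify} with the constant family $\Phi_R=\Phi$ (using $G_\Phi^{-1}(R/2)\to0$), get almost sure extinction by comparing the log-Laplace cumulant with the $\Phi$-CSBP cumulant together with Grey's condition, and handle the range by taking the union over the events $\{\tau\le n\}$.

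One slip to correct: the comparison gives $V_t\lambda(x)\le v_t(\lambda)$ for every $x$, not $\ge$. Also, $V_t\lambda$ depends on $x$ when $\Psi$ does, so the display should read $\mathbb{P}_\mu\{X_t(1)=0\}=\exp\bigl(-\langle \lim_{\lambda\to\infty}V_t\lambda,\mu\rangle\bigr)\ge\exp(-\mu(1)\bar v_t)$. This is the direction your displayed inequality, and your remark that the total mass is dominated by the CSBP, already use.
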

The proof of Corollary \ref{coro2} is given in Section \ref{sec4}.

\begin{remark}
If a branching mechanism $\Phi$ is spatially constant satisfying Condition \ref{con1}, then it satisfies Condition \ref{con_general}, and we recover the same compact support criterion of Sheu \cite{Sheu94,SHEU1997129}.
\end{remark}

\section{The speed of support propagation}\label{sec2}
\indent In this section, we prove Theorem \ref{th1}. The argument works directly under the local lower bound Condition \ref{con_localize}. The global lower bound and spatially constant cases are obtained by taking the local lower bound family to be independent of the radius.

The proof proceeds in four steps. First, we express the exit event through a weighted occupation time and its associated mild log-Laplace equation. Second, we dominate this solution by a local singular boundary blow-up solution. Third, a Feynman-Kac argument reduces the estimate to a Brownian exit probability of a given open ball. Finally, a finite covering of the initial support and the branching property yield the estimate for general compactly supported initial data.

\subsection{Preparations for the main results}
In this subsection, we prepare for the propagation speed estimate. Let $R>0$ and let $\psi_R$ be the radially symmetric function defined by
\begin{equation}\label{definpsi}
			\psi_R(x)=\left\{\begin{aligned}&0,\quad &| x|\leq R;\\
				&|x|/R-1,\quad &R<| x|\leq 2R;\\
				&1,\quad &| x|>2R.
			\end{aligned}\right.
\end{equation}
For a spatially dependent mechanism $\Psi$, let $u_\theta^\Psi(x,t)$ denote
the unique non-negative mild solution, locally bounded on each finite time
interval, of
\begin{equation}\label{PDE1Psi}
u_\theta^\Psi(x,t)=P_x\left[
\int_0^t\left\{\theta\psi_R(W_s)-\Psi(W_s,u_\theta^\Psi(W_s,t-s))
\right\}\dd s\right].
\end{equation}
The formal differential form of \eqref{PDE1Psi} is
\begin{equation}\label{PDE1Psi_formal}
\left\{
\begin{aligned}
\dfrac{\partial u_\theta^\Psi(x,t)}{\partial t}&=\dfrac{1}{2}\Delta u_\theta^\Psi(x,t)
-\Psi(x,u_\theta^\Psi(x,t))
+\theta \psi_R(x),
\quad (x,t)\in\mathbb{R}^d\times(0,\infty);\\
u_\theta^\Psi(x,0)&=0.
\end{aligned}
\right.
\end{equation}
When $\Psi(x,u)=\Phi(u)$ is spatially constant, we also write
$u_\theta^\Phi$ for the corresponding mild solution. It satisfies 
\begin{equation}\label{PDE1}
u_\theta^\Phi(x,t)=P_x\left[\int_0^t
\left\{\theta\psi_R(W_s)-\Phi(u_\theta^\Phi(W_s,t-s))
\right\}\dd s
\right].
\end{equation}
The formal differential form of \eqref{PDE1} is
\begin{equation}\label{PDE1_formal}
\left\{
\begin{aligned}
\dfrac{\partial u_\theta^\Phi(x,t)}{\partial t}
&=
\dfrac{1}{2}\Delta u_\theta^\Phi(x,t)
-\Phi(u_\theta^\Phi(x,t))
+\theta\psi_R(x),
\quad (x,t)\in\mathbb{R}^d\times(0,\infty);\\
u_\theta^\Phi(x,0)&=0.
\end{aligned}
\right.
\end{equation}
As mentioned above, the existence and uniqueness of the mild solutions to \eqref{PDE1Psi} and
\eqref{PDE1} are ensured by the theory of weighted occupation times; see \cite{Iscoe86} and \cite{Li22} for details.

The following proposition is the comparison principle used for the estimate of propagation probability.  It is proved in Appendix \ref{app_1} by a local parabolic barrier argument.
\begin{proposition}\label{prop_upperbound}
Suppose that $\Psi$ satisfies Condition \ref{con_localize}. Then, for $\theta>0$, $t\geq 0, R>0$ and $x\in B(0,R)$, we have
\begin{align}\label{stp2}
0\leq u_\theta^\Psi(x,t)\leq u_{R,\Phi_R}(x),
\end{align}
where $u_\theta^\Psi(x,t)$ is the unique non-negative mild solution of \eqref{PDE1Psi}, and $u_{R,\Phi_R}(x)$ is the unique radial classical solution of the singular boundary value problem  \eqref{SBVP} with $\Phi$ replaced by $\Phi_R$.
\end{proposition}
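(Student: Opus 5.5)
Since $\psi_R\equiv 0$ on $\overline{B(0,R)}$, inside the ball the forcing term disappears and $u^\Psi_\theta$ is a subsolution of $\frac12\Delta u=\Phi_R(u)$. The plan is a local comparison on $B(0,R)$ against the boundary blow-up solution $u_{R,\Phi_R}$. Because that barrier is infinite on $\partial B(0,R)$, the boundary values of $u^\Psi_\theta$ never need to be controlled. Nonnegativity follows from the probabilistic representation \eqref{eq:weighted_occupation_laplace}: $u^\Psi_\theta(x,t)=-\log\mathbb{P}_{\delta_x}[\exp(-\theta Y_t(\psi_R))]\ge 0$.

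To avoid the singular boundary, I first shrink the ball. For $r<R$, set $v_r(x):=u_{r,\Phi_R}(x)$ on $B(0,r)$. Condition \ref{con_localize} gives $\Phi_R$ satisfying Condition \ref{con1}, so Keller--Osserman provides $v_r$, and $v_r\to\infty$ at $|x|=r$. Fix $T>0$. The mild solution $u^\Psi_\theta$ is locally bounded on $[0,T]$, by the uniqueness statement. Hence $u^\Psi_\theta\le v_r$ on a neighborhood of $\partial B(0,r)\times[0,T]$.

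Next I pass the mild equation \eqref{PDE1Psi} to the ball. By the strong Markov property at $\tau_r:=\inf\{s:|W_s|\ge r\}$, for $x\in B(0,r)$ and $t\le T$,
$$u^\Psi_\theta(x,t)=P_x\Big[u^\Psi_\theta(W_{\tau_r},t-\tau_r)\mathbf 1_{\{\tau_r<t\}}-\int_0^{t\wedge\tau_r}\Psi(W_s,u^\Psi_\theta(W_s,t-s))\,\dd s\Big].$$
Here I use $\psi_R=0$ on $B(0,r)$. I then replace $\Psi$ by $\Phi_R\le\Psi$, which turns the equality into "$\le$". So $u^\Psi_\theta$ is a mild subsolution on $B(0,r)\times[0,T]$ with zero initial data and finite lateral data. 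The stationary function $v_r$ is a classical, hence mild, solution. Since $v_r$ is infinite on the lateral boundary, I compare on $B(0,r-\epsilon)$ instead, where it is finite and dominates. I take $\epsilon$ small enough that $v_r\ge \sup_{[0,T]\times B(0,r)}u^\Psi_\theta$ on $\partial B(0,r-\epsilon)$. This is possible by the blow-up together with radial monotonicity of $v_r$.

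The comparison is then a standard Gronwall argument, using that $\Phi_R$ is nondecreasing and locally Lipschitz on bounded sets. Set $w=(u^\Psi_\theta-v_r)^+$; both functions are bounded on $\overline{B(0,r-\epsilon)}\times[0,T]$. Subtracting the representations and using monotonicity of $\Phi_R$ gives
$$w(x,t)\le P_x\Big[\int_0^{t\wedge\tau}L\,w(W_s,t-s)\,\dd s\Big],$$
with boundary and initial terms nonpositive. Alternatively, one can use a Feynman--Kac form with nonnegative potential $(\Phi_R(u)-\Phi_R(v))/(u-v)$, which yields $w\le0$ directly. Either way, Gronwall gives $w\equiv0$. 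Hence $u^\Psi_\theta\le u_{r,\Phi_R}$ on $B(0,r-\epsilon)\times[0,T]$.

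Finally I let $\epsilon\downarrow0$ and $r\uparrow R$. I need $u_{r,\Phi_R}\downarrow u_{R,\Phi_R}$ pointwise. Monotonicity in $r$ follows from comparison of blow-up solutions. The limit is a blow-up solution on $B(0,R)$ by interior elliptic estimates and the Keller bound \eqref{eq:inverseUpperBound} near the boundary, and it equals $u_{R,\Phi_R}$ by the Costin--Dupaigne uniqueness. Since $T$ is arbitrary, this proves \eqref{stp2}.

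The main obstacle I expect is rigor in the comparison step. It mixes a mild, possibly non-smooth, parabolic subsolution with a classical elliptic solution, and only locally bounded data are available. Working entirely in the Markov-stopped mild formulation, rather than invoking a classical maximum principle, is what I would try first.
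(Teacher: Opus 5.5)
Your strategy is the one the paper uses in its appendix barrier lemma, and it is correct in outline. You pass to the stopped mild formulation on a ball strictly inside $B(0,R)$, where $\psi_R=0$ and the elliptic barrier is finite and $C^2$ up to the boundary. You choose that ball so the barrier exceeds the a priori bound on $u_\theta^\Psi$ on its boundary; the paper uses the explicit bound $0\le u_\theta^\Psi\le\theta T$, which follows from $\psi_R\le 1$ and $\Psi\ge 0$. You then close with a mild comparison and Gronwall.

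\textbf{The Gronwall step as you first state it does not follow.} With $u=u_\theta^\Psi$, $v=v_r$ and $\sigma=t\wedge\tau$, subtracting the undiscounted stopped representations gives
\[
u(x,t)-v(x)\le P_x\big[u(W_\sigma,t-\sigma)-v(W_\sigma)\big]+P_x\Big[\int_0^{\sigma}\big\{\Phi_R(v(W_s))-\Phi_R(u(W_s,t-s))\big\}\,\dd s\Big].
\]
Monotonicity of $\Phi_R$ makes the integrand non-positive only where $u\ge v$. Where $u<v$ (e.g.\ near $\partial B(0,r-\epsilon)$, where $v$ is large) it is positive, of size up to $L(v-u)$, so it is not dominated by $L(u-v)^+$. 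In the integral formulation, monotone absorption has the wrong sign for a direct Gronwall bound on $w^+$; it helps only pointwise, in a classical maximum principle.

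The paper repairs this with the discount $e^{-Ls}$ and the shifted nonlinearity $H(x,z)=Lz-\Psi(x,z)$. Since $H$ is non-decreasing on $[0,M]$, one has $H(x,a)-H(x,b)\le 2L(a-b)^+$, and Gronwall then applies to $w^+$. Your fallback, Feynman--Kac with the potential $c=(\Phi_R(u)-\Phi_R(v))/(u-v)\in[0,L]$, is equivalent and correct. However, it needs the same variation-of-constants step, turning the undiscounted stopped inequality into $w(x,t)\le P_x\big[e^{-\int_0^{\sigma}c}\,w(W_\sigma,t-\sigma)\big]$. The paper does this by a partition/telescoping argument; that is where the real work lies, so it should be your main route rather than an aside. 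The paper also keeps the identity for $u_\theta^\Psi$ and puts $\Phi_R\le\Psi$ on the barrier side via It\^o's formula, so it only has to discount an identity.

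\textbf{The detour through $u_{r,\Phi_R}$ and the limit $r\uparrow R$ is unnecessary.} For $r<R$, the function $u_{R,\Phi_R}$ itself is finite and $C^2$ on $\overline{B(0,r)}$. By its boundary blow-up you can choose $r$ with $u_{R,\Phi_R}(r)>\theta T$. Comparing directly against it on $B(0,r)\times[0,T]$ gives the bound there, and $r$ can be taken arbitrarily close to $R$, with $T$ arbitrary. Your limit argument (monotonicity in $r$, interior elliptic estimates, Costin--Dupaigne uniqueness) is valid but is extra work. Without uniqueness it would only give a bound by $\lim_{r\uparrow R}u_{r,\Phi_R}$, which is the \emph{maximal} blow-up solution. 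The direct comparison instead bounds $u_\theta^\Psi$ by every boundary blow-up solution, including the minimal one.
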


\begin{remark}
If Condition \ref{con_general} holds with lower bound $\Phi$, then one may take $\Phi_R=\Phi$ for all $R>0$. In that case, the mild solution $u_\theta^\Phi$ associated with the spatially constant lower bound $\Phi$ satisfies
$$
0\leq u_\theta^\Psi(x,t)\leq u_\theta^\Phi(x,t)\leq u_{R,\Phi}(x),
\qquad x\in B(0,R),\quad t\geq0,
$$
by the same comparison argument. The spatially constant case corresponds to $\Psi(x,u)=\Phi(u)$.
\end{remark}

Applying the Feynman-Kac formula and Proposition \ref{prop_upperbound}, we obtain the following pointwise estimate.
\begin{proposition}\label{prop_FK_estimate}
Suppose that $\Psi$ satisfies Condition \ref{con_localize}. Then for any $0<R_1<R$, we have
\begin{align}\label{FK_estimate}
u_\theta^\Psi(x,t)\leq u_{R,\Phi_R}(R_1)P_x(\sigma_{R_1}<t),\quad \forall x\in B(0,R_1),\ t>0,
\end{align}
where $P_x$ denotes the law of a $d$-dimensional Brownian motion $W$ starting from $x$ and $\sigma_{R_1}:=\inf\{s>0:|W_s|=R_1\}$.
\end{proposition}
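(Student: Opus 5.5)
The plan is to use the strong Markov property of Brownian motion at the exit time $\sigma_{R_1}$ from $B(0,R_1)$, combined with the a priori bound of Proposition \ref{prop_upperbound} and the radial monotonicity of the blow-up solution $u_{R,\Phi_R}$. Fix $x\in B(0,R_1)$ and $t>0$. Since $R_1<R$, the weight $\psi_R$ vanishes on $\overline{B(0,R_1)}$. Moreover, $\Psi(y,\cdot)\ge 0$ on $[0,\infty)$ for (sub)critical mechanisms.

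First, on the event $\{\sigma_{R_1}\ge t\}$ the path stays in $\overline{B(0,R_1)}$ up to time $t$. There the source term $\theta\psi_R(W_s)$ contributes nothing and the absorption term $-\Psi$ is nonpositive.

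Second, I would make this rigorous by rewriting the mild equation \eqref{PDE1Psi} in its stopped (Dynkin/Markov) form at the stopping time $\tau:=\sigma_{R_1}\wedge t$:
$$u_\theta^\Psi(x,t)=P_x\Big[u_\theta^\Psi(W_\tau,t-\tau)+\int_0^\tau\{\theta\psi_R(W_s)-\Psi(W_s,u_\theta^\Psi(W_s,t-s))\}\dd s\Big].$$
This identity follows from the Markov property of $W$ applied to the mild equation, as is standard for the mild equations in Li \cite{Li22}; for a stopping time one uses the strong Markov property together with the flow property of $t\mapsto u^\Psi_\theta(\cdot,t)$. On $[0,\tau]$ we have $\psi_R(W_s)=0$, and the $\Psi$ term is nonnegative and can be dropped. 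Hence
$$u_\theta^\Psi(x,t)\le P_x\big[u_\theta^\Psi(W_\tau,t-\tau)\big].$$
On $\{\sigma_{R_1}\ge t\}$ we have $t-\tau=0$, and $u_\theta^\Psi(\cdot,0)=0$, so this part contributes zero. On $\{\sigma_{R_1}<t\}$ we have $|W_\tau|=R_1<R$. Proposition \ref{prop_upperbound} then gives $u_\theta^\Psi(W_\tau,t-\tau)\le u_{R,\Phi_R}(W_\tau)=u_{R,\Phi_R}(R_1)$ by radial symmetry. Combining the two cases yields \eqref{FK_estimate}.

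An alternative that avoids the stopped identity is a Feynman--Kac representation. Write $\Psi(y,u)=c(y,u)u$ with $c\ge0$ (legitimate since $\Psi(y,0)=0$ and $\Psi(y,\cdot)$ is nondecreasing), and view $u_\theta^\Psi$ as solving a linear equation with killing rate $c(y,u_\theta^\Psi(y,t-s))$. The killing factor is bounded by $1$, giving the same bound.

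The main obstacle is justifying the stopped mild identity at the random time $\tau$ for a merely mild, locally bounded solution. I would first prove it for deterministic times via the Markov property, which gives the semigroup identity for the mild equation. I would then extend it to stopping times by discretizing $\tau$ from above with dyadic stopping times. The limit passage uses local boundedness of $u_\theta^\Psi$ on $[0,t]$, continuity of $u^\Psi_\theta$ in space-time (a consequence of the Brownian mild formulation with bounded data), and bounded convergence. If continuity is troublesome, I would instead approximate $u_\theta^\Psi$ by classical solutions with smoothed data and apply It\^o's formula directly up to $\tau$.
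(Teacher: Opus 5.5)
Your argument is correct and is essentially the paper's. The paper first linearizes via the Feynman--Kac formula with the bounded potential $q_\theta=\Psi(y,u_\theta^\Psi)/u_\theta^\Psi$ (your stated alternative), applies the strong Markov property at $\sigma_{R_1}$, and then discards the factor $\exp\{-\int_0^{\sigma_{R_1}}q_\theta(W_r,t-r)\,\dd r\}\le 1$. You instead discard the nonnegative absorption integral directly in the stopped nonlinear identity, which spares you justifying the Feynman--Kac representation.

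The stopped identity you flag as the main obstacle is in fact immediate and needs no dyadic approximation, continuity, or smoothing. Split $\int_0^t$ at $\tau$ and apply the strong Markov property at $\tau$ to the fixed bounded measurable integrand $F(y,r)=\theta\psi_R(y)-\Psi(y,u_\theta^\Psi(y,r))$. By the mild equation at time $t-\sigma_{R_1}$ this gives $P_x[\mathbf 1_{\{\sigma_{R_1}<t\}}\int_{\sigma_{R_1}}^t F(W_s,t-s)\,\dd s]=P_x[\mathbf 1_{\{\sigma_{R_1}<t\}}u_\theta^\Psi(W_{\sigma_{R_1}},t-\sigma_{R_1})]$, which is exactly how the paper derives its stopped identity in the appendix.
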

\begin{proof}
Fix $x\in B(0,R_1)$ and $t>0$. Since $0\leq \psi_R\leq1$ and $\Psi(y,u)\geq0$ for $u\geq0$, the mild equation \eqref{PDE1Psi} gives the a priori bound
$$
0\leq u_\theta^\Psi(y,s)\leq \theta s\leq \theta t,
\qquad y\in\mathbb R^d,\quad 0\leq s\leq t.
$$
Define
$$
q_\theta(y,s):=
\begin{cases}
\dfrac{\Psi(y,u_\theta^\Psi(y,s))}{u_\theta^\Psi(y,s)},& u_\theta^\Psi(y,s)>0,\\[1.2ex]
0,& u_\theta^\Psi(y,s)=0.
\end{cases}
$$
Then $q_\theta\geq0$ and
$$
\Psi(y,u_\theta^\Psi(y,s))=q_\theta(y,s)u_\theta^\Psi(y,s).
$$
The value of $q_\theta$ on the set $\{u_\theta^\Psi=0\}$ is arbitrary for this product identity, and our definition is for convenience. By the L\'evy--Khintchine representation and the boundedness assumptions on the coefficients of $\Psi$, the local Lipschitz constant of $\Psi(y,\cdot)$ on $[0,\theta t]$ is bounded uniformly in $y\in\mathbb R^d$. Hence $q_\theta$ is a bounded non-negative measurable potential on $\mathbb R^d\times[0,t]$.

Using the identity
$\Psi(y,u_\theta^\Psi(y,s))=q_\theta(y,s)u_\theta^\Psi(y,s)$, the mild weighted
occupation time equation \eqref{PDE1Psi} may therefore be viewed  as the linear mild equation with source $\theta\psi_R$ and potential $q_\theta$ on this finite time interval. The Feynman-Kac representation for bounded measurable potentials gives
\begin{equation}\label{eq:global_FK_representation}
 u_\theta^\Psi(x,t)
 =P_x\left[\int_0^t \theta\psi_R(W_s)
 \exp\left\{-\int_0^s q_\theta(W_r,t-r)\dd r\right\}\dd s\right].
\end{equation}
Let
$$
\sigma_{R_1}:=\inf\{s>0:|W_s|=R_1\}.
$$
Since $R_1<R$, the function $\psi_R$ vanishes on $B(0,R_1)$. Conditioning on $\mathscr F_{\sigma_{R_1}\wedge t}$ in \eqref{eq:global_FK_representation}, we split the expectation according to $\{\sigma_{R_1}\geq t\}$ and $\{\sigma_{R_1}<t\}$. On $\{\sigma_{R_1}\geq t\}$, the Brownian path remains in $B(0,R_1)$ up to time $t$, and hence the contribution is zero. On $\{\sigma_{R_1}<t\}$, the same support property gives $\psi_R(W_s)=0$ for $0\leq s<\sigma_{R_1}$, and therefore
\begin{align}
 u_\theta^\Psi(x,t)
&=
P_x\left[
\mathbf{1}_{\{\sigma_{R_1}<t\}}
P_x\left[
\left.
\int_{\sigma_{R_1}}^t \theta\psi_R(W_s)
\exp\left\{-\int_0^s q_\theta(W_r,t-r)\dd r\right\}\dd s
\right|\mathscr F_{\sigma_{R_1}}
\right]
\right] \nonumber\\
&=
P_x\left[
\mathbf{1}_{\{\sigma_{R_1}<t\}}
\exp\left\{-\int_0^{\sigma_{R_1}}q_\theta(W_r,t-r)\dd r\right\}
I_{\sigma_{R_1}}
\right],
\label{eq:FK_split_conditional}
\end{align}
where
\begin{align*}
I_{\sigma_{R_1}}
:=P_x\left[
\left.
\int_{\sigma_{R_1}}^t \theta\psi_R(W_s)
\exp\left\{-\int_{\sigma_{R_1}}^s q_\theta(W_r,t-r)\dd r\right\}\dd s
\right|\mathscr F_{\sigma_{R_1}}
\right].
\end{align*}
By the strong Markov property of Brownian motion at $\sigma_{R_1}$, on $\{\sigma_{R_1}<t\}$ we have
\begin{align*}
I_{\sigma_{R_1}}
&=
P_{W_{\sigma_{R_1}}}\left[
\int_0^{t-\sigma_{R_1}}\theta\psi_R(W_s)
\exp\left\{-\int_0^s q_\theta(W_r,t-\sigma_{R_1}-r)\dd r\right\}\dd s
\right]\\
&=u_\theta^\Psi(W_{\sigma_{R_1}},t-\sigma_{R_1}).
\end{align*}
Substituting this identity into \eqref{eq:FK_split_conditional}, we obtain
\begin{equation}\label{Mar}
\begin{aligned}
u_\theta^\Psi(x,t)
=&P_x\left[ \mathbf{1}_{\{\sigma_{R_1}<t\}}
\exp\left\{-\int_0^{\sigma_{R_1}}q_\theta(W_s,t-s)\dd s\right\}
 u_\theta^\Psi(W_{\sigma_{R_1}},t-\sigma_{R_1})\right]\\
\leq& P_x\left[ \mathbf{1}_{\{\sigma_{R_1}<t\}}
 u_\theta^\Psi(W_{\sigma_{R_1}},t-\sigma_{R_1})\right].
\end{aligned}
\end{equation}
Here, the last inequality holds because $q_\theta\geq0$. By Proposition \ref{prop_upperbound}, on the event $\{\sigma_{R_1}<t\}$,
$$
u_\theta^\Psi(W_{\sigma_{R_1}},t-\sigma_{R_1})\leq u_{R,\Phi_R}(W_{\sigma_{R_1}})=u_{R,\Phi_R}(R_1),
$$
because $u_{R,\Phi_R}$ is radial. Combining this estimate with \eqref{Mar}, we deduce \eqref{FK_estimate}.
\end{proof}

\subsection{Propagation probability out of a ball}
In this subsection, we estimate the probability that the superprocess exits a ball before time $t$.

\begin{proposition}\label{prop1}
Suppose that $X$ is a super-Brownian motion with branching mechanism $\Psi$ satisfying Condition \ref{con_localize}. For any $\mu\in M_c$, there exists a positive constant $C(d)$ that depends only on the dimension $d$, such that for any $0\leq\rho(\mu)<R_1 < R$ and $t\in\left(0,\dfrac{(R_1-\rho(\mu))^2}{d}\right)$, we have
\begin{equation}\label{keyresult}
\begin{aligned}
&\mathbb{P}_{\mu}\{\exists s\leq t,X_s(\overline{B(0,R)}^c)>0\}\\
&\quad\leq \mu(1)\cdot u_{R,\Phi_R}(R_1)\cdot C(d) \left(\dfrac{R_1-\rho(\mu)}{\sqrt{t}}\right)^{d-2}\exp\left(-\dfrac{(R_1-\rho(\mu))^2}{2t}\right)\\
&\quad\leq \mu(1)\cdot G_{\Phi_R}^{-1}(R-R_1)\cdot C(d) \left(\dfrac{R_1-\rho(\mu)}{\sqrt{t}}\right)^{d-2}\exp\left(-\dfrac{(R_1-\rho(\mu))^2}{2t}\right).
\end{aligned}
\end{equation}
\end{proposition}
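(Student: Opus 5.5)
We will follow Iscoe's weighted occupation time approach. The key steps are: convert the exit event into a Laplace functional, bound that functional via Proposition \ref{prop_FK_estimate}, and then estimate the resulting Brownian exit probability.

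\textbf{Step 1 (reduction to the log-Laplace functional).} Let $\psi_R$ be as in \eqref{definpsi}, and let $Y_t(\psi_R)=\int_0^t\langle\psi_R,X_s\rangle\dd s$. Right-continuity of $s\mapsto X_s$ in the weak topology, together with openness of $\overline{B(0,R)}^c$, gives the following implication. If $X_s(\overline{B(0,R)}^c)>0$ for some $s\le t$, then for any slightly larger $t'>t$ the mass charges the open set $\{\psi_{R'}>0\}$ on a time interval of positive length, where $R'$ is slightly less than $R$. Hence $Y_{t'}(\psi_{R'})>0$. To keep the bookkeeping simple, we first prove the bound with $R$ replaced by $R'\in(R_1,R)$ and $t$ by $t'$, and at the end let $R'\uparrow R$ and $t'\downarrow t$. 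The right-hand side is continuous in these parameters, provided $u_{R',\Phi_R}$ (or the corresponding barrier) converges to $u_{R,\Phi_R}$ at the fixed interior radius $R_1$.

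Letting $\theta\to\infty$ in \eqref{eq:weighted_occupation_laplace} then yields
\[
\mathbb P_\mu\{Y_t(\psi_R)>0\}=1-\exp\Big(-\lim_{\theta\to\infty}\langle u^\Psi_\theta(\cdot,t),\mu\rangle\Big)\le \lim_{\theta\to\infty}\langle u^\Psi_\theta(\cdot,t),\mu\rangle .
\]
Here $u_\theta^\Psi$ is nondecreasing in $\theta$ by the comparison principle.

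\textbf{Step 2 (pointwise bound).} The measure $\mu$ is supported in $\overline{B(0,\rho(\mu))}\subset B(0,R_1)$. Proposition \ref{prop_FK_estimate} gives, uniformly in $\theta$,
\[
u^\Psi_\theta(x,t)\le u_{R,\Phi_R}(R_1)\,P_x(\sigma_{R_1}<t),\qquad x\in S(\mu).
\]
Integrating against $\mu$ shows that the probability is at most $\mu(1)\,u_{R,\Phi_R}(R_1)\sup_{|x|\le\rho(\mu)}P_x(\sigma_{R_1}<t)$.

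\textbf{Step 3 (Brownian estimate).} For $|x|\le\rho(\mu)$, a Brownian motion exiting $B(0,R_1)$ before time $t$ must travel distance at least $a:=R_1-\rho(\mu)$, so
\[
P_x(\sigma_{R_1}<t)\le P_0\Big(\sup_{s\le t}|W_s|\ge a\Big).
\]
We estimate this via the reflection principle applied to the Bessel process $|W|$: either by Doob's inequality on the submartingale, or by using that $|W_s|^2-ds$ is a martingale and stopping at the hitting time. This reduces the bound to roughly $2P_0(|W_t|\ge a)$, up to a dimensional constant. The Gaussian tail of the $\chi_d$ distribution then gives
\[
P_0(|W_t|\ge a)\le C(d)\,(a/\sqrt t)^{d-2}e^{-a^2/2t}\qquad\text{whenever } a^2/t> d.
\]
This is exactly where the restriction $t<a^2/d$ enters, since it makes $(a/\sqrt t)^{d-2}$ dominate the lower-order terms of the incomplete gamma function. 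The precise reflection step is the main technical obstacle. For $|W|$ we cannot reflect directly, so we first try the standard bound
\[
P\Big(\sup_{s\le t}|W_s|\ge a\Big)\le \frac{P(|W_t|\ge a)}{P_{y}(|W_{\tau}|\ge a)} .
\]
This is obtained through the strong Markov property at the hitting time of the sphere of radius $a$: with probability bounded below by a dimensional constant, $|W|$ is still at least $a$ at time $t$, because from a point on the sphere Brownian motion has probability at least $c(d)>0$ of lying outside the ball at any later time. Note that a half-space containing the exterior does the job, giving $c(d)=1/2$.

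\textbf{Step 4 (Keller bound).} The second inequality in \eqref{keyresult} follows immediately from \eqref{eq:inverseUpperBound}, which gives $u_{R,\Phi_R}(R_1)\le G^{-1}_{\Phi_R}(R-R_1)$.
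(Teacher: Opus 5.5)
Your proposal is correct and follows the paper's proof essentially step for step: the weighted occupation time with $f=\psi_R$ and $\theta\to\infty$, then the $\theta$-uniform Feynman--Kac bound of Proposition \ref{prop_FK_estimate}, then a Brownian exit estimate, then Keller's estimate \eqref{eq:inverseUpperBound}; the paper only cites the exit estimate as standard, whereas you derive it correctly via the strong Markov property at the sphere (half-space bound $c(d)=1/2$) and the $\chi_d$ tail. Your $t'>t$ device handles the endpoint case $s=t$, which the paper glosses over, but the passage to $R'<R$ is unnecessary because $\{\psi_R>0\}=\overline{B(0,R)}^c$ is already open, so you can drop it together with the unproved convergence proviso for $u_{R',\Phi_R}(R_1)$.
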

\begin{proof}
For simplicity, define
\begin{equation}
 p(t,\mu,R):=\mathbb{P}_{\mu}\{\exists s\leq t,X_s(\overline{B(0,R)}^c)>0\}.
\end{equation}
By the weighted occupation time representation
\eqref{eq:weighted_occupation_laplace}, applied with $f=\psi_R$, we have
$$
p(t,\mu,R)\leq \lim_{\theta\to\infty}\int_{\mathbb R^d}u_\theta^\Psi(x,t)\mu(\dd x),
$$
where $u_\theta^\Psi$ is the mild solution of \eqref{PDE1Psi}. Applying Proposition \ref{prop_FK_estimate} and using $S(\mu)\subset \overline{B(0,\rho(\mu))}$, we get
$$
p(t,\mu,R)
\leq
u_{R,\Phi_R}(R_1)\int_{S(\mu)}P_x(\sigma_{R_1}<t)\mu(\dd x).
$$
By the standard Brownian exit estimate, for $x\in S(\mu)$ and $t\in\left(0,(R_1-\rho(\mu))^2/d\right)$,
$$
P_x(\sigma_{R_1}<t)
\leq
C(d)\left(\frac{R_1-\rho(\mu)}{\sqrt{t}}\right)^{d-2}
\exp\left(-\frac{(R_1-\rho(\mu))^2}{2t}\right).
$$
Therefore,
$$
p(t,\mu,R)
\leq
\mu(1)u_{R,\Phi_R}(R_1)C(d)\left(\frac{R_1-\rho(\mu)}{\sqrt{t}}\right)^{d-2}
\exp\left(-\frac{(R_1-\rho(\mu))^2}{2t}\right).
$$
The second inequality in \eqref{keyresult} follows from the Keller estimate \eqref{eq:inverseUpperBound}.
\end{proof}

\subsection{Propagation probability out of a shifted ball}
To state the shifted estimate, let $\mu\in M_c$ and $x_0\in\mathbb R^d$. Define
\begin{equation}\label{def_rhox0}
\rho_{x_0}(\mu):=\inf\{r>0:S(\mu)\subset B(x_0,r)\}.
\end{equation}

\begin{coro}\label{corotrans}
Suppose that $X$ is a super-Brownian motion with branching mechanism $\Psi$ satisfying Condition \ref{con_localize}. Let $x_0\in\mathbb R^d$, $R>0$, and let $L>0$ be such that
$$
\overline{B(x_0,R)}\subset \overline{B(0,L)}.
$$
Then there exists a positive constant $C(d)$ which depends only on the dimension $d$, such that for any $0\leq\rho_{x_0}(\mu)<R_1 < R$ and $t\in\left(0,\dfrac{(R_1-\rho_{x_0}(\mu))^2}{d}\right)$, we have
\begin{equation}\label{shifted_estimate}
\begin{aligned}
&\mathbb{P}_{\mu}\{\exists s\leq t,X_s(\overline{B(x_0,R)}^c)>0\}\\
&\quad\leq \mu(1)\cdot u_{R,\Phi_L}(R_1)\cdot C(d) \left(\dfrac{R_1-\rho_{x_0}(\mu)}{\sqrt{t}}\right)^{d-2}\exp\left(-\dfrac{(R_1-\rho_{x_0}(\mu))^2}{2t}\right)\\
&\quad\leq \mu(1)\cdot G_{\Phi_L}^{-1}(R-R_1)\cdot C(d) \left(\dfrac{R_1-\rho_{x_0}(\mu)}{\sqrt{t}}\right)^{d-2}\exp\left(-\dfrac{(R_1-\rho_{x_0}(\mu))^2}{2t}\right).
\end{aligned}
\end{equation}
In particular, if $\mu=a\cdot\delta_{x_0}$ and $L$ satisfies $\overline{B(x_0,R)}\subset \overline{B(0,L)}$, then the same estimate holds with $\rho_{x_0}(\mu)=0$ and $\mu(1)=a$.
\end{coro}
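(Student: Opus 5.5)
The plan is to reduce Corollary \ref{corotrans} to Proposition \ref{prop1} by translating space by $x_0$. The only genuine issue is that translating changes the local lower bound: the mechanism for the shifted process must be controlled on $\overline{B(0,R)}$ in the new coordinates, which corresponds to $\overline{B(x_0,R)}$ in the original ones. That is exactly what the hypothesis $\overline{B(x_0,R)}\subset\overline{B(0,L)}$ is for.

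\textbf{Step 1: translate.} Let $\tau(x)=x-x_0$ and set $\tilde X_s:=X_s\circ\tau^{-1}$, the image of $X_s$ under the shift. Because Brownian motion is translation invariant, $\tilde X$ is a super-Brownian motion started from $\tilde\mu=\mu\circ\tau^{-1}$ with branching mechanism $\tilde\Psi(y,u):=\Psi(y+x_0,u)$. One checks this directly from \eqref{cumu} and \eqref{inteq}: $v(\cdot+x_0,t)$ solves the mild equation for $\tilde\Psi$ with initial datum $f(\cdot+x_0)$, and uniqueness of the mild solution identifies the cumulant semigroup. Translation leaves $\tilde\mu(1)=\mu(1)$ unchanged and gives $\rho(\tilde\mu)=\rho_{x_0}(\mu)$. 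It also converts the exit event into $\{\exists s\le t,\ \tilde X_s(\overline{B(0,R)}^c)>0\}$.

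\textbf{Step 2: transfer the lower bound.} Proposition \ref{prop1} uses Condition \ref{con_localize} only through Proposition \ref{prop_upperbound}, and only at the single radius $R$. In other words, it needs a spatially constant $\Phi$ satisfying Condition \ref{con1} with $\Phi\le\tilde\Psi(y,\cdot)$ for all $y\in\overline{B(0,R)}$. For such $y$ we have $y+x_0\in\overline{B(x_0,R)}\subset\overline{B(0,L)}$, so Condition \ref{con_localize} for $\Psi$ at radius $L$ gives $\Phi_L(u)\le\Psi(y+x_0,u)=\tilde\Psi(y,u)$. Hence $\Phi:=\Phi_L$ works as the lower bound on $\overline{B(0,R)}$.

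\textbf{Step 3: rerun the chain of estimates.} The proofs of Proposition \ref{prop_upperbound} (the barrier argument in Appendix \ref{app_1}), Proposition \ref{prop_FK_estimate} and Proposition \ref{prop1} use the lower bound only on $\overline{B(0,R)}$. Applying them to $\tilde X$ with $\Phi_R$ replaced by $\Phi_L$ gives
\[
u^{\tilde\Psi}_\theta(y,t)\le u_{R,\Phi_L}(y)\quad\text{for } y\in B(0,R),
\]
then the Feynman--Kac bound with the constant $u_{R,\Phi_L}(R_1)$, and finally \eqref{keyresult} with $\rho(\tilde\mu)=\rho_{x_0}(\mu)$. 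The second inequality in \eqref{shifted_estimate} follows from the Keller bound \eqref{eq:inverseUpperBound} applied to $\Phi_L$. The Dirac case $\mu=a\delta_{x_0}$ has $\rho_{x_0}(\mu)=0$ and $\mu(1)=a$. There the admissible range is $0<R_1<R$ and $t<R_1^2/d$, and the bound follows immediately.

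\textbf{Main obstacle.} The delicate point is Step 3. Proposition \ref{prop_upperbound} is stated under Condition \ref{con_localize} with $\Phi_R$ tied to the radius of the ball, so I must verify that its appendix proof needs the lower bound only on $\overline{B(0,R)}$ and for a single comparison mechanism. If the appendix genuinely uses the whole family $\{\Phi_r\}$, I would instead define a new family for $\tilde\Psi$ that satisfies Condition \ref{con_localize}: take $\tilde\Phi_r:=\Phi_{r+|x_0|}$ for $r\neq R$ and $\tilde\Phi_R:=\Phi_L$. This is legitimate for every $r$, since $\overline{B(x_0,r)}\subset\overline{B(0,r+|x_0|)}$ and the radius-$R$ choice is justified by Step 2. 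With this family, Proposition \ref{prop1} applies verbatim.
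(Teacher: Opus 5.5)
Your proposal is correct and is essentially the paper's argument: the paper also rests on the barrier Lemma~\ref{le:localBarrierPsi} with the single lower bound $\Phi_L$ on $\overline{B(x_0,R)}$, followed by the Feynman--Kac stopping argument and the Brownian exit estimate. The difference is only cosmetic. The paper stays in the original coordinates, shifting the test function to $\psi_{R,x_0}$ and the stopping time to $\sigma_{x_0,R_1}$. You instead translate the process, and your relabelled family ($\tilde\Phi_R:=\Phi_L$, $\tilde\Phi_r:=\Phi_{r+|x_0|}$) lets you invoke Proposition~\ref{prop1} as a black box, at the small extra cost of checking that the translated process is the $\tilde\Psi$-superprocess.
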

\begin{proof}
Let
$$
\psi_{R,x_0}(x):=\psi_R(x-x_0).
$$
Let $u_{\theta,x_0}^\Psi$ be the mild solution of \eqref{PDE1Psi} with $\psi_R$ replaced by $\psi_{R,x_0}$. It is the log-Laplace cumulant associated with the weighted occupation time
$$
\theta\int_0^t\langle \psi_{R,x_0},X_s\rangle\dd s.
$$
Since $\overline{B(x_0,R)}\subset \overline{B(0,L)}$, Condition \ref{con_localize} gives
$$
\Psi(x,u)\geq \Phi_L(u),
\qquad x\in\overline{B(x_0,R)},\quad u\geq0.
$$
The shifted version of the local parabolic barrier in Lemma \ref{le:localBarrierPsi} gives
$$
0\leq u_{\theta,x_0}^\Psi(x,t)\leq u_{R,\Phi_L}(|x-x_0|),
\qquad x\in B(x_0,R),\quad t\geq0.
$$
Repeating the Feynman-Kac stopping argument in Proposition \ref{prop_FK_estimate}, with
$$
\sigma_{x_0,R_1}:=\inf\{s>0:|W_s-x_0|=R_1\},
$$
yields
$$
u_{\theta,x_0}^\Psi(x,t)
\leq
u_{R,\Phi_L}(R_1)P_x(\sigma_{x_0,R_1}<t),
\qquad x\in B(x_0,R_1),\quad t>0.
$$
The weighted occupation time representation gives
$$
\mathbb{P}_{\mu}\{\exists s\leq t,X_s(\overline{B(x_0,R)}^c)>0\}
\leq
\lim_{\theta\to\infty}\int_{\mathbb R^d}u_{\theta,x_0}^\Psi(x,t)\mu(\dd x).
$$
Using the preceding pointwise estimate and the Brownian exit estimate with distance $R_1-\rho_{x_0}(\mu)$ gives the first inequality in \eqref{shifted_estimate}. The Keller estimate gives the second inequality.
\end{proof}

\subsection{Proof of Theorem \ref{th1}}\label{subsec:proof_th1}						
Now we are ready to prove Theorem \ref{th1}.
\begin{proof}[Proof of Theorem \ref{th1}]
The key insight is that we can cover the initial support with balls of arbitrarily small radius $\delta$, and the final bound will be independent of this choice.
	
Fix $\delta > 0$ with $\delta\in(0, R_1-\sqrt{dt})$. This choice of $\delta$ ensures the application of Corollary \ref{corotrans}. Since $S(\mu)$ is compact, there exists a finite collection of points $\{x_1, \dots, x_N\} \subset S(\mu)$ such that
$$
S(\mu) \subset \bigcup_{i=1}^N B\left(x_i, \delta\right).
$$
Construct a disjoint decomposition $\{A_1, \dots, A_N\}$ of $S(\mu)$ where
$$
A_1 = S(\mu) \cap B\left(x_1, \delta\right), \quad A_i = S(\mu) \cap \left[B\left(x_i, \delta\right) \setminus \bigcup_{j=1}^{i-1} A_j\right] \text{ for } i = 2, \dots, N.
$$
Define restricted measures $\mu_i(A) = \mu(A \cap A_i)$. Then $\mu = \sum_{i=1}^N \mu_i$,  $S(\mu_i)\subset \overline{B(x_i,\delta)}$ and $\rho_{x_i}(\mu_i)\le \delta$.
							
By the branching property, $X$ can be decomposed into independent components $X_t = \sum_{i=1}^N X_t^{(i)}$. Then
\begin{equation}
\mathbb{P}_\mu\left\{\exists u \leq t: X_u((S(\mu)^R)^c) > 0\right\} \leq \sum_{i=1}^N \mathbb{P}_{\mu_i}\left\{\exists u \leq t: X_u^{(i)}((S(\mu)^R)^c) > 0\right\}.
\end{equation}
For each $i$, since $x_i\in S(\mu)$, we have
$$
\overline{B(x_i,R)}\subset S(\mu)^R,
$$
and hence
$$
(S(\mu)^R)^c\subset \overline{B(x_i,R)}^c.
$$
Moreover, $\overline{B(x_i,R)}\subset \overline{B(0,\rho(\mu)+R)}$. Applying Corollary \ref{corotrans} with center $x_i$, radius $R$, lower-bound index $L=\rho(\mu)+R$, and $R_1\in(\delta,R)$, we obtain
$$
\mathbb{P}_{\mu_i}\left\{\exists u \leq t:
X_u^{(i)}(\overline{B(x_i, R)}^c) > 0\right\}
\leq
\mu_i(1) u_{R,\Phi_{\rho(\mu)+R}}(R_1) C(d)
\left(\frac{R_1-\delta}{\sqrt t}\right)^{d-2}
\exp\left(-\frac{(R_1-\delta)^2}{2t}\right).
$$
Summing over $i$ gives the same bound with $\mu_i(1)$ replaced by $\mu(1)$. Letting $\delta\downarrow0$ within the interval $(0,R_1-\sqrt{dt})$ gives
$$
\mathbb{P}_\mu\left\{X_u\left((S(\mu)^R)^c\right) > 0 \text{ for some } u \leq t\right\}
\leq
\mu(1) u_{R,\Phi_{\rho(\mu)+R}}(R_1) C(d)
\left(\frac{R_1}{\sqrt t}\right)^{d-2}
\exp\left(-\frac{R_1^2}{2t}\right).
$$
The Keller estimate gives
$$
u_{R,\Phi_{\rho(\mu)+R}}(R_1)
\leq
G_{\Phi_{\rho(\mu)+R}}^{-1}(R-R_1),
$$
which proves the first assertion. The point-mass estimate is the special case of Corollary \ref{corotrans} with center $x$, initial measure $a\delta_x$, and any $L$ such that $\overline{B(x,R)}\subset\overline{B(0,L)}$; taking $L=|x|+R$ gives the stated bound.
\end{proof}	

\begin{remark}
Our upper bound on the propagation of support (Theorem \ref{th1}) generalizes \cite[Proposition 1.4 and Corollary 1.5]{dawson1994almost}, where  superprocesses with spatially constant stable branching mechanisms were considered and self-similarity applied in the proof. We instead utilize the weighted occupation time for superprocess and the mild comparison arguments for log-Laplace equations. Furthermore, there is a difference in the way estimates are derived for a process with general initial measure. The upper bound in \cite{dawson1994almost} is  established  by first considering a point mass at the origin \cite[Proposition 1.4]{dawson1994almost}, and subsequently extending the result to a general initial measure through weak convergence and the strong Feller property of $(2,d,\beta)$-superprocesses \cite[Corollary 1.5]{dawson1994almost}. In contrast, we find the upper bound directly for the superprocess with compactly supported initial measure by obtaining a uniform estimate in  Proposition \ref{prop_upperbound} via the Feynman-Kac formula and exploiting the branching property for the superprocess.
\end{remark}

\section{ Application: compact support property}\label{sec4}
As an application, we work directly with spatially dependent branching mechanisms satisfying the local lower bound Condition \ref{con_localize}. The global lower bound case in Condition \ref{con_general}, and hence the spatially constant case, will then follow as special cases.

Applying the localized speed estimate in Proposition \ref{prop1}, we can prove the compact-support criterion.
\begin{proof}[Proof of Theorem \ref{th:csp_unify}]
For $t, R>0$ define 
$$
A_R(t):=\left\{ \exists s \leq t : X_s \left(\overline{B(0,R)}^c\right) > 0  \right\}.
$$
Then for fixed $t>0$, $A_R(t)$ is decreasing with respect to $R$.
Let $R_1=R/2$ be large enough so that by Proposition \ref{prop1} we have
\begin{equation}\label{upper_bound}
\mathbb{P}_{\mu}(A_R(t))\leq \mu(1)\cdot G_{\Phi_R}^{-1}(R/2)\cdot C(d)\cdot \left( \frac{R/2-\rho(\mu)}{\sqrt{t}} \right)^{d-2} \exp \left( -\frac{(R/2-\rho(\mu))^2}{2t} \right).
\end{equation}
Under assumption \eqref{eq:sufficientCon}, there exist $M>0$ and a sequence $\{a_k\}$ increasing to $+\infty$ such that for all large $k$
$$
\dfrac{\ln G_{\Phi_{a_k}}^{-1}(a_k/2)}{a_k^2}<M,
$$
which is equivalent to 
$$
G_{\Phi_{a_k}}^{-1}(a_k/2)<\exp(M a_k^2).
$$
Choosing $t_0>0$ satisfying $M<\frac{1}{32 t_0}$, we have
$$
G_{\Phi_{a_k}}^{-1}(a_k/2)\left( \frac{a_k/2-\rho(\mu)}{\sqrt{t_0}} \right)^{d-2} \exp \left( -\frac{(a_k/2-\rho(\mu))^2}{2t_0} \right)\to0\quad \text{as }k\to\infty.
$$
It then follows from \eqref{upper_bound} that 
$$
\lim_{k\to\infty}\mathbb{P}_{\mu}(A_{a_k}(t_0))= 0.
$$
Since $A_R(t)$ decreases in $R$, we have for any $0\leq t< t_0$
$$
\mathbb{P}_{\mu} \left( \lim_{R\to\infty} A_R(t) \right) = \lim_{k \to \infty} \mathbb{P}_{\mu}(A_{a_k}(t)) = 0.
$$
Hence
$$
\mathbb{P}_\mu\left\{ \overline{\bigcup_{s \le t} S(X_s)} \text{ is compact} \right\} = 1.
$$

For any fixed target time $t > 0$, we partition the time interval $[0, t]$ into $N$ sub-intervals of equal length $\Delta t = t/N < t_0$. Let $t_k = k \Delta t$ for $k = 0, 1, \dots, N$.
For each $k$, define
$$
\rho_k := \sup_{s \le t_k} \inf \{ r > 0 : S(X_s) \subseteq B(0, r) \}.
$$
We shall prove by induction on $k$ that, for all $k = 1, \dots, N$,
\begin{equation}\label{induction}
  \mathbb{P}_\mu(\rho_k < \infty) = 1 ,\quad \forall \mu\in M_c.
\end{equation} 
For $k=1$, the previous discussion gives $\mathbb{P}_\mu(\rho_1 < \infty) = 1$ and $X_{t_1}\in M_c$ almost surely. Now assume that \eqref{induction} holds for some $k\geq 1$. Then $X_{t_k}\in M_c$ almost surely. Applying the strong Markov property of the super-Brownian motion, we have
$$
\begin{aligned}
    &\mathbb{P}_\mu \left( \overline{\bigcup_{s \in [0,\Delta t]} S(X_{t_k+s})} \text{ is compact}\;\middle|\; \mathcal{F}_{t_k} \right)\\
    &= \mathbb{P}_{X_{t_k}} \left( \overline{\bigcup_{s \in [0,\Delta t]} S(\tilde{X}_s)} \text{ is compact} \right)=1,
\end{aligned}
$$
where $\tilde{X}$ is a super-Brownian motion with the same branching mechanism as $X$. Thus
$$
\mathbb{P}_\mu(\rho_{k+1} < \infty) = 1.
$$
The desired result follows. 
\end{proof}

Now, we turn to the case that the branching mechanism $\Psi$ satisfies Condition \ref{con_general}.
\begin{proof}[Proof of Corollary \ref{coro2}]
Since $\Psi$ satisfies Condition \ref{con_general}, it satisfies Condition \ref{con_localize} with $\Phi_R=\Phi$ for every $R>0$. Moreover, $G_{\Phi}^{-1}(R/2)\to0$ as $R\to\infty$, and hence
$$
\liminf_{R\to\infty}\dfrac{\ln G_{\Phi}^{-1}(R/2)}{R^2}<\infty.
$$
Theorem \ref{th:csp_unify} therefore gives the compact support property.

To establish the compactness of the range $\mathscr{R}(X)$, it remains to show that $X$ becomes extinct in finite time almost surely.
This follows directly from the cumulant comparison. For $\lambda>0$, let
$u_\lambda(x,t)$ be the log-Laplace cumulant corresponding to the test
function $f\equiv\lambda$, so that
$$
\mathbb E_\mu\left[\exp\{-\lambda X_t(1)\}\right]
=
\exp\left\{-\langle u_\lambda(\cdot,t),\mu\rangle\right\}.
$$
Then $u_\lambda$ solves
$$
\partial_t u_\lambda
=
\frac12\Delta u_\lambda-\Psi(x,u_\lambda),
\qquad
u_\lambda(x,0)=\lambda .
$$
Let $v_\lambda(t)$ be the cumulant of the $\Phi$-CSBP, namely
$$
\frac{\dd}{\dd t}v_\lambda(t)=-\Phi(v_\lambda(t)),
\qquad
v_\lambda(0)=\lambda .
$$
Since $\Psi(x,u)\ge\Phi(u)$, the spatially constant function
$v_\lambda(t)$ is a supersolution of the equation for $u_\lambda$. Hence,
by comparison,
$$
u_\lambda(x,t)\le v_\lambda(t),
\qquad x\in\mathbb R^d,
\quad t\ge0 .
$$
Consequently,
$$
\mathbb E_\mu\left[\exp\{-\lambda X_t(1)\}\right]
\ge
\exp\{-\mu(1)v_\lambda(t)\}.
$$
Letting $\lambda\to\infty$, we obtain
$$
\mathbb P_\mu(X_t(1)=0)
\ge
\exp\{-\mu(1)v_\infty(t)\},
\qquad
v_\infty(t):=\lim_{\lambda\to\infty}v_\lambda(t).
$$
By Grey's condition together with the critical or subcritical assumption,
$v_\infty(t)<\infty$ for every $t>0$, and
$$
v_\infty(t)\downarrow0,
\qquad t\to\infty .
$$
Therefore
$$
\mathbb P_\mu(\zeta<\infty)
=
\lim_{t\to\infty}\mathbb P_\mu(X_t(1)=0)
\ge
\lim_{t\to\infty}\exp\{-\mu(1)v_\infty(t)\}
=1,
$$
where
$$
\zeta:=\inf\{t>0:X_t(1)=0\}.
$$
Thus $X$ becomes extinct in finite time almost surely.
    
Notice that $X_t$ is the null measure for all $t \ge \zeta$. Therefore, the total range collapses to the range over a finite time interval and
$$
\mathscr{R}(X) = \overline{\bigcup_{0 \le t \le \zeta} S(X_t)}.
$$
For any integer $k \in \mathbb{N}$, on the event $\{\zeta \le k\}$, we have $\mathscr{R}(X) \subset \overline{\bigcup_{0 \le t \le k} S(X_t)}$. From the compact-support property on $[0,k]$,
$$
\mathbb P_\mu\left(
\bigcup_{n=1}^\infty
\left\{\forall s\le k,\ S(X_s)\subset \overline{B(0,n)}\right\}
\right)=1.
$$ 
Consequently,
$$
\mathbb{P}_\mu(\mathscr{R}(X) \text{ is compact}) \ge \mathbb{P}_\mu(\{ \mathscr{R}(X) \text{ is compact} \} \cap \{\zeta \le k\}) = \mathbb{P}_\mu(\zeta \le k).
$$
Taking the limit as an integer $k \to \infty$, we obtain
$$
\mathbb{P}_\mu(\mathscr{R}(X) \text{ is compact}) \ge \lim_{k \to \infty} \mathbb{P}_\mu(\zeta \le k) = \mathbb{P}_\mu(\zeta < \infty) = 1.
$$
\end{proof}

\begin{coro}
 If the branching mechanism $\Psi$ of a super-Brownian motion $X$ satisfies Condition \ref{con_general}, then for any $\mu\in M_F$,
$$
\mathbb{P}_{\mu}\{S(X_t)\text{ is compact for all } t>0\}=1.
$$   
\end{coro}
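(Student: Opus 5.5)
The plan is to reduce the case $\mu\in M_F$ (not necessarily compactly supported) to the compactly supported case by showing that, under Condition \ref{con_general}, the process started from an arbitrary finite measure has compactly supported state at every fixed positive time. The key quantitative input is a "coming down from infinity in space" estimate. Since $\Psi\geq\Phi$ with $\Phi$ satisfying Condition \ref{con1}, the log-Laplace functional of the event that a fixed ball is charged is controlled by the boundary blow-up solution $u_{R,\Phi}$ of \eqref{SBVP}. This control is uniform in the starting point, because $\Phi$ does not depend on $x$.

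First step. For $x\in\mathbb{R}^d$ and $r>0$, I estimate the probability that mass started far away reaches $\overline{B(0,r)}$. By the branching property, $\mu=\mu|_{B(0,n)}+\mu|_{B(0,n)^c}$, and the two pieces evolve independently. The restriction to $B(0,n)$ is compactly supported, so by Corollary \ref{coro2} its contribution has compact support on every $[0,t]$. For the far part $\nu_n:=\mu|_{B(0,n)^c}$, I use the cumulant bound coming from the shifted ball estimate. For each starting point $y$ with $|y|\ge n$, the event that the process from $\delta_y$ charges $\overline{B(0,r)}$ before time $t$ is contained in the event that it leaves $\overline{B(y,|y|-r)}$. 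Corollary \ref{corotrans} (with $\Phi_L=\Phi$, so the constant is independent of $L$) then gives a bound of the form
$$
\mathbb{P}_{\nu_n}\{\exists s\le t: X_s(\overline{B(0,r)})>0\}\le \int_{|y|\ge n} G_\Phi^{-1}(\varepsilon)\,C(d)\Big(\tfrac{|y|-r-\varepsilon}{\sqrt t}\Big)^{d-2}e^{-(|y|-r-\varepsilon)^2/(2t)}\,\mu(\dd y).
$$
This bound requires $t$ small. For general $t$, I instead use the limit $\theta\to\infty$ of the cumulant together with the fact that $u^\Psi_\theta(y,t)\le u_{|y|-r,\Phi}(y-y)$, which is bounded uniformly in $y$ by $u_{\varepsilon,\Phi}(0)$ once $|y|-r\ge\varepsilon$. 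This uniform bound alone does not vanish, so it has to be paired with the exit probability. The integrand tends to $0$ as $|y|\to\infty$ and is bounded, and $\mu(B(0,n)^c)\to0$, so the right-hand side tends to $0$ as $n\to\infty$.

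Second step, which I expect to be the main obstacle. I need a statement at a fixed time $t>0$ rather than over a whole time window, because near time $0$ the support of $X_s$ is essentially that of $\mu$ and need not be compact. The plan is to show that for fixed $t>0$ and every $r$ the event $\{X_t(\overline{B(0,r)}^c)>0\}$ can be made small in $r$. To do this I use the a priori bound $u\le u_{R,\Phi}$ from Proposition \ref{prop_upperbound}, now applied to balls centered at far points $y$ with radius $|y|-r$, via the shifted barrier of Corollary \ref{corotrans}. The bound states that the $\theta\to\infty$ cumulant of a ball of radius $\varrho$ stays below $u_{\varrho,\Phi}(0)\le G_\Phi^{-1}(\varrho)$. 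As $\varrho\to\infty$ this tends to $0$, uniformly in the center. Integrating against $\mu$ bounds the probability by $\mu(1)\,G_\Phi^{-1}(\varrho)$, which tends to $0$. Taking a countable union over rational $t$ and using the compactness of $\overline{\bigcup_{s\in[t,t']}S(X_s)}$ obtained from the Markov property at time $t$ together with Corollary \ref{coro2} then yields compactness simultaneously for all $t>0$.

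Last step. Given $\mathbb{P}_\mu(X_{t}\in M_c)=1$ for each rational $t>0$, the Markov property at time $t$ and Corollary \ref{coro2} applied to $X_t$ give that $S(X_s)$ is compact for all $s\ge t$, almost surely. Letting $t\downarrow0$ along rationals finishes the proof. The delicate point is justifying the uniform-in-center bound on the hitting cumulant under $\Psi\ge\Phi$, which is exactly where Condition \ref{con_general} rather than Condition \ref{con_localize} is needed.
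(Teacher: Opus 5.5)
Your proposal has a genuine gap: it never uses finite-time extinction, and the spatial barrier you rely on cannot stand in for it. The estimate from Lemma \ref{le:localBarrierPsi} and Corollary \ref{corotrans}, $\lim_{\theta\to\infty}u^\Psi_{\theta,y}(y,t)\le u_{\varrho,\Phi}(0)\le G_\Phi^{-1}(\varrho)$, controls only \emph{displacement}. It says that mass started at $y$ is unlikely to charge $\overline{B(y,\varrho)}^c$.

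\begin{itemize}
\item Your first step uses it to show that far mass rarely \emph{enters} $\overline{B(0,r)}$. That is not the event that matters for compactness.
\item Your second step uses it to bound $\mathbb{P}_\mu\{X_t(\overline{B(0,r)}^c)>0\}$, which it cannot do. The part of $\mu$ lying outside $\overline{B(0,r)}$ already charges $\overline{B(0,r)}^c$ at time $0$, and it still does so at time $t$ unless it dies out. A barrier saying that mass does not move far only makes this more likely.
\item At best, the barrier gives that all mass stays within distance $\varrho$ of its starting point, i.e.\ $S(X_t)\subset S(\mu)^{\varrho}$. That set is non-compact whenever $S(\mu)$ is.
\end{itemize}

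There is also a formal problem. Integrating $u^\Psi_{\theta,y}(y,t)$ against $\mu(\dd y)$, with a $y$-dependent test function $\psi_{\varrho,y}$, is not the log-Laplace exponent of any single weighted occupation time of $X$. So the bound $\mu(1)G_\Phi^{-1}(\varrho)$ is not the probability of any event you can name.

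The missing idea is extinction. This is where Grey's condition (implied by Condition \ref{con1}) and the \emph{global} lower bound of Condition \ref{con_general} enter. The steps are as follows.

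\begin{enumerate}
\item Split $\mu=\mu_1+\mu_2$ with $\mu_2=\mu|_{B(0,n)^c}$, and write $X=X^{(1)}+X^{(2)}$ with independent components by the branching property.
\item Then $\{S(X_t)\text{ compact}\}\supset\{S(X^{(1)}_t)\text{ compact}\}\cap\{X^{(2)}_t=0\}$. The first event has probability one by Corollary \ref{coro2}, since $\mu_1\in M_c$.
\item Because $\Psi\ge\Phi$ everywhere, the extinction cumulant of $X$ is dominated, uniformly in $x$, by $v_\infty(t)=\lim_{\lambda\to\infty}v_\lambda(t)$ of the $\Phi$-CSBP. This is finite by Grey's condition.
\item Hence $\mathbb{P}_{\mu_2}(X^{(2)}_t=0)\ge\exp(-\mu_2(1)v_\infty(t))\to1$ as $n\to\infty$, which gives $\mathbb{P}_\mu\{S(X_t)\text{ compact}\}=1$ for each fixed $t>0$.
\end{enumerate}

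The uniformity in the starting point that you correctly sensed is needed is uniformity of this extinction bound, not of the blow-up barrier. Your last step (Markov property at rational times combined with Corollary \ref{coro2}) then completes the proof, exactly as in the paper.
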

                
\begin{proof}
For $R>0$, let $\mu_1(\cdot)=\mu(\cdot\cap B(0,R))$ and $\mu_2(\cdot)=\mu(\cdot\cap B(0,R)^c)$. Then $\mu=\mu_1+\mu_2$ and $\mu_1\in M_c$. Then the super-Brownian motion $X$ started from $\mu$ can be decomposed into 
$$
X_t=X^{(1)}_t+X_t^{(2)}
$$
where $X^{(i)}, i=1,2$ are independent super-Brownian motions with branching mechanisms $\Psi$ starting from $\mu_i, i=1,2$. 
By the branching property, for $t>0$, 
\begin{equation}
\mathbb{P}_\mu\{S(X_t) \text{ is compact}\}= \mathbb{P}_{\mu_1}\{S(X_t^{(1)}) \text{ is compact}\}\mathbb{P}_{\mu_2}\{S(X_t^{(2)}) \text{ is compact}\}.
\end{equation}
By Corollary \ref{coro2}, 
$$
\mathbb{P}_{\mu_1}\{S(X_t^{(1)}) \text{ is compact}\}=1.
$$
Notice that 
$$
\{S(X_t^{(2)}) \text{ is compact}\}\supset \{X_t^{(2)}=0\},
$$
we have
$$
\mathbb{P}_\mu\{S(X_t) \text{ is compact}\}\geq\mathbb{P}_{\mu_2}\{X_t^{(2)}=0\}.
$$
Now we estimate the probability of extinction.
                                
Recall that the extinction probability of $X_t^{(2)}$ is given by
\begin{equation}
\mathbb{P}_{\mu_2}\{X_t^{(2)}=0\}=\exp\left\{-\int_{\mathbb{R}^d}v(x,t)\mu_2(dx)\right\}
\end{equation}
where $v(x,t)$ is the positive solution of \eqref{deq} with $f(x)\equiv\infty$. If we denote the cumulant semigroup of $\Phi$-CSBP by $v_t(\lambda)$ and set $v_t=\lim_{\lambda\to\infty}v_t(\lambda)$, then, applying the comparison principle, we see that $v(x,t)$ is bounded above by $v(t)$. Since $\Phi$ satisfies Grey's condition, we have $v(t)<\infty$ for all $t$. Hence, we have
\begin{equation}
\mathbb{P}_\mu\{S(X_t) \text{ is compact}\}\geq \mathbb{P}_{\mu_2}\{X_t^{(2)}=0\}\geq \exp(-\left<\mu_2,1\right> v(t)).
\end{equation}
Since the above inequality holds for any decomposition of $\mu$, letting $R\to\infty$, we have $\left<\mu_2,1\right>\to0$, which implies that
$$
\mathbb{P}_\mu\{S(X_t) \text{ is compact}\}\geq 1.
$$
This proves the following. 
$$
\mathbb{P}_\mu\{S(X_t) \text{ is compact}\}= 1,
\quad \forall t>0.
$$
For each integer $n\geq1$, apply the preceding argument with
$t=1/n$. Since
$$
\mathbb{P}_\mu\{S(X_{1/n})\text{ is compact}\}=1,
$$
Applying the Markov property and the finite-time compact support property to compactly
supported initial measures repeatedly gives
$$
\mathbb{P}_{\mu}\{S(X_t)\text{ is compact for all }t\geq 1/n\}=1.
$$
Taking the countable intersection over $n\geq1$, we obtain
$$
\mathbb{P}_{\mu}\{S(X_t)\text{ is compact for all }t>0\}=1.
$$
\end{proof}

\subsection{Proof of Corollary \ref{coro:regularVarying}}\label{sec:proof_regular_varying}

\begin{proof}[Proof of Corollary \ref{coro:regularVarying}]
Set
$$
\Phi_R(u):=\beta_R\Phi(u),\qquad u\geq0.
$$
Since $\beta_R>0$ and $\Phi$ satisfies Condition \ref{con1}, each
$\Phi_R$ also satisfies Condition \ref{con1}. Moreover, for every $R>0$,
$$
\Psi(x,u)\geq \Phi_R(u),
\qquad x\in\overline{B(0,R)},\quad u\geq0.
$$
Thus Condition \ref{con_localize} holds. It remains to verify the inverse
Keller integral condition \eqref{eq:sufficientCon}.

Let
$$
F(z):=\int_0^z \Phi(v)\dd v.
$$
Since $\Phi$ is regularly varying at infinity with index $1+p$, where
$p\in(0,1]$, Karamata's theorem \cite[Chapter 1]{BGT87} gives
$$
F(z)\sim \frac{z \Phi(z)}{2+p},
\qquad z\to\infty.
$$
In particular, $F$ is regularly varying at infinity with index $2+p$.
Consequently,
$$
\frac{1}{\sqrt{2F(z)}}
$$
is regularly varying at infinity with index $-(2+p)/2=-1-p/2$. By
Karamata's theorem for tails,
$$
G_\Phi(z):=
\int_z^\infty \frac{\dd s}{\sqrt{2F(s)}}
$$
is regularly varying at infinity with index $-p/2$. Hence $G_{\Phi}^{-1}$ is
regularly varying at $0$ with index $-2/p$.

Since
$$
G_{\Phi_R}(z)
=
\int_z^\infty
\frac{\dd s}{\sqrt{2\int_0^s\beta_R \Phi(v)\dd v}}
=
\beta_R^{-1/2}G_\Phi(z),
$$
we have
\begin{equation}\label{eq:rv_inverse_scaling_detail}
G_{\Phi_R}^{-1}(R/2)
=
G_{\Phi}^{-1}\left(\frac{R\sqrt{\beta_R}}{2}\right).
\end{equation}

Assume that
$$
\limsup_{R\to\infty}\frac{\ln \beta_R}{R^2}>-\infty.
$$
Then there exist a sequence $R_n\to\infty$ and a constant $C<\infty$ such
that
\begin{align} \label{eq:exponentialsubsequence}
\beta_{R_n}\geq \exp(-C R_n^2).
\end{align}

If $R_n\sqrt{\beta_{R_n}}$ is bounded away from $0$ along a subsequence,
then there exists $\delta>0$ such that
$$
\frac{R_n\sqrt{\beta_{R_n}}}{2}\geq \delta
$$
along this subsequence. Since $G_{\Phi}^{-1}$ is decreasing, we have
$$
G_{\Phi}^{-1}\left(\frac{R_n\sqrt{\beta_{R_n}}}{2}\right)
\leq G_{\Phi}^{-1}(\delta)<\infty.
$$
Hence \eqref{eq:rv_inverse_scaling_detail} gives
$$
\liminf_{R\to\infty}
\frac{\ln G_{\Phi_R}^{-1}(R/2)}{R^2}<\infty.
$$

Hence it remains to consider
the case in which, after passing to a subsequence if necessary,
$$
R_n\sqrt{\beta_{R_n}}\to0.
$$

Since $G_{\Phi}^{-1}$ is regularly varying at $0$ with index $-2/p$, Potter's
bound \cite[Theorem 1.5.6]{BGT87} implies that, for every $\varepsilon>0$,
there exists $C_\varepsilon<\infty$ such that, for all sufficiently large
$n$,
$$
G_{\Phi}^{-1}\left(\frac{R_n\sqrt{\beta_{R_n}}}{2}\right)
\leq
C_\varepsilon
\left(\frac{R_n\sqrt{\beta_{R_n}}}{2}\right)^{-2/p-\varepsilon}.
$$
Combining \eqref{eq:rv_inverse_scaling_detail} and \eqref{eq:exponentialsubsequence}, we obtain
$$
\begin{aligned}
\ln G_{\Phi_{R_n}}^{-1}(R_n/2)
&\leq
\ln C_\varepsilon
+\left(\frac{2}{p}+\varepsilon\right)
\left|
\ln\left(\frac{R_n\sqrt{\beta_{R_n}}}{2}\right)
\right|  \\
&\leq
\left(\frac{1}{p}+\frac{\varepsilon}{2}\right)C R_n^2
+O(\ln R_n).
\end{aligned}
$$
Therefore,
$$
\liminf_{R\to\infty}
\frac{\ln G_{\Phi_R}^{-1}(R/2)}{R^2}<\infty.
$$
Thus \eqref{eq:sufficientCon} holds. The assertion follows from Theorem
\ref{th:csp_unify}.

\end{proof}

\appendix
\section{Appendix}
\subsection{Local parabolic barriers}\label{app_1}

In this appendix, we collect the local comparison estimate used in the proof
of Proposition \ref{prop_upperbound}. The argument is local in a ball. It
uses the boundary blow-up of the elliptic Keller solution and a comparison
argument for the stopped mild equations. In particular, no classical
regularity of the log-Laplace solution is needed.

\begin{lemma}[Local parabolic barrier on a ball]\label{le:localBarrierPsi}
Let $\Phi_0$ be  a spatially constant branching mechanism  satisfying  Condition \ref{con1} and $\Psi$ be another  branching mechanism  such that
$$
\Psi(x,u)\geq \Phi_0(u),
\qquad x\in\overline{B(x_0,R)},\quad u\geq0
$$
for $x_0\in\mathbb{R}^d$ and $R>0$.
Set $\psi_{R,x_0}(x):=\psi_R(x-x_0)$ for $\psi_R$  defined in \eqref{definpsi}. Given $\theta>0$, let $u_\theta(x,t)$ be the
unique non-negative mild solution, locally bounded on each finite time interval, of
\begin{equation}\label{eq:PDE_shifted_appendix}
u_\theta(x,t)
=
P_x\left[
\int_0^t
\left\{
\theta\psi_{R,x_0}(W_s)
-
\Psi(W_s,u_\theta(W_s,t-s))
\right\}\dd s
\right]
\end{equation}
and let $u_{R,\Phi_0}$ denote the radial classical solution of \eqref{SBVP} in $B(0,R)$ associated with $\Phi_0$. Then, we have
\begin{equation}\label{eq:localBarrierPsi_alltime}
u_\theta(x,t)\leq u_{R,\Phi_0}(|x-x_0|),
\qquad x\in B(x_0,R),\quad t\geq0 .
\end{equation}
\end{lemma}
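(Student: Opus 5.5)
The plan is to compare $u_\theta$ with the elliptic Keller solution on slightly smaller balls, localized by stopping the Brownian motion when it leaves the ball. Translating, we may take $x_0=0$. For $r\in(0,R)$, let $w_r(x):=u_{r,\Phi_0}(|x|)$ be the radial blow-up solution of \eqref{SBVP} on $B(0,r)$. Existence and uniqueness under Condition \ref{con1} were recalled above. The function $w_r$ is smooth in $B(0,r)$, and by uniqueness and the maximum principle $w_r\downarrow u_{R,\Phi_0}$ pointwise on $B(0,R)$ as $r\uparrow R$. Since $\psi_{R,0}=0$ on $B(0,R)$ and $\Psi\ge\Phi_0$ there, on $B(0,r)$ the function $w_r$ is a stationary classical solution of $\partial_t w=\frac12\Delta w-\Phi_0(w)+\theta\psi_{R,0}$, hence a supersolution of the $\Psi$-equation. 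It is infinite on $\partial B(0,r)$, and it dominates the initial datum $0$.

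Next I localize the mild equation. Fix $x\in B(0,r)$ and $t>0$, and let $\tau_\rho:=\inf\{s:|W_s|\ge \rho\}$ for $\rho<r$. By the strong Markov property at $\tau_\rho\wedge t$ (the same conditioning used in Proposition \ref{prop_FK_estimate}), $u_\theta$ satisfies the stopped mild identity
$$u_\theta(x,t)=P_x\Big[u_\theta(W_{\tau_\rho\wedge t},t-\tau_\rho\wedge t)-\int_0^{\tau_\rho\wedge t}\Psi(W_s,u_\theta(W_s,t-s))\dd s\Big],$$
where $\psi_{R,0}$ drops out because the path stays inside $B(0,R)$. Since $w_r$ is $C^2$ on $\overline{B(0,\rho)}$, It\^o's formula gives the analogous identity for $w_r$, with $\Phi_0(w_r)$ in place of $\Psi(\cdot,u_\theta)$ and $w_r(W_{\tau_\rho\wedge t})$ at the stopping time.

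For the difference $D(y,s):=u_\theta(y,s)-w_r(y)$, I write $\Psi(y,u_\theta)-\Phi_0(w_r)\ge \Phi_0(u_\theta)-\Phi_0(w_r)=c(y,s)D$, where $c\ge0$ is the difference quotient of $\Phi_0$. This $c$ is bounded because $u_\theta\le\theta t$ and $w_r$ is bounded on $\overline{B(0,\rho)}$. Applying Feynman--Kac to the linear inequality, exactly as in Proposition \ref{prop_FK_estimate}, yields
$$D(x,t)\le P_x\Big[e^{-\int_0^{\tau_\rho\wedge t}c}\,D(W_{\tau_\rho\wedge t},t-\tau_\rho\wedge t)\Big]\le P_x\big[D^+(W_{\tau_\rho\wedge t},t-\tau_\rho\wedge t)\big].$$
There are two kinds of terminal points. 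On $\{\tau_\rho> t\}$ the terminal value is $D(\cdot,0)=-w_r\le0$. On $\{\tau_\rho\le t\}$ we use $D^+\le(\theta t-\min_{|y|=\rho}w_r)^+$, which equals $0$ once $\rho$ is close enough to $r$, because $w_r\to\infty$ at $\partial B(0,r)$ while $u_\theta\le\theta t$. Hence $u_\theta(x,t)\le w_r(x)$, and letting $r\uparrow R$ gives \eqref{eq:localBarrierPsi_alltime}.

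The main obstacle is making the Feynman--Kac comparison rigorous without any regularity of $u_\theta$. One needs the linearized inequality in mild form, and the sign of $c$ must be justified even when $D$ changes sign. I would handle this by a Gronwall argument on $\sup_{y\in B(0,\rho)}D^+(y,s)$ for $s\le t$, using the stopped identities together with the Lipschitz bound of $\Phi_0$ on $[0,\max(\theta t,\sup_{B(0,\rho)}w_r)]$. This shows that $D^+$ satisfies a homogeneous inequality with zero data and therefore vanishes. A secondary point is the monotone convergence $w_r\downarrow u_{R,\Phi_0}$, which follows from comparison of blow-up solutions on nested balls together with uniqueness (Costin--Dupaigne).
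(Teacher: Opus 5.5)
Your argument is correct and has the same skeleton as the paper's proof. You stop the Brownian motion at the boundary of a ball on whose closure the elliptic barrier is $C^2$ and on whose boundary it already exceeds $\theta T$. You use $0\le u_\theta\le\theta t$ and $u_\theta(\cdot,0)=0$ to make both terminal contributions non-positive. You then compare the stopped mild identity for $u_\theta$ with It\^o's formula for the barrier, using $\Psi\ge\Phi_0$ in the ball.

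It differs from the paper in two details. First, the paper has no family $w_r$: it fixes $T$, uses $U=u_{R,\Phi_0}(|\cdot-x_0|)$ itself, and stops at $\partial B(x_0,r)$ with $r<R$ chosen so that $u_{R,\Phi_0}(r)>\theta T$. Your $\rho$-stopping already does this job, so the layer $w_r$ and the limit $w_r\downarrow u_{R,\Phi_0}$ (interior estimates plus Costin--Dupaigne uniqueness) can be dropped. The direct route also bounds $u_\theta$ by every boundary blow-up solution, whereas the limit of your $w_r$ is only the maximal one.

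Second, the paper does not linearize. It discounts by $e^{-Ls}$, with $L$ a Lipschitz constant of $\Psi$ on $[0,M]$, and uses that $H(x,z)=Lz-\Psi(x,z)$ is non-decreasing. This gives $H(u_\theta)-H(U)\le 2L(u_\theta-U)^+$, so Gronwall closes for $\sup w^+$. Your Feynman--Kac device is an equivalent alternative: write $\Psi(\cdot,u_\theta)-\Phi_0(w_r)=cD+g$ with $g=\Psi(\cdot,u_\theta)-\Phi_0(u_\theta)\ge0$, then weight by $e^{-\int c}$. Here $c\ge0$ follows from monotonicity of $\Phi_0$ alone, whatever the sign of $D$, so that worry in your last paragraph is moot.

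Be careful with your proposed fallback, though. A Gronwall argument for $D^+$ run on the \emph{undiscounted} stopped identities you wrote does not close. There the nonlinear term is only bounded by $\Phi_0(w_r)-\Phi_0(u_\theta)\le L\,D^-$, not by a multiple of $D^+$. A discount (your $e^{-\int c}$, or the paper's $e^{-Ls}$ with monotone $H$) is what makes the inequality homogeneous in $D^+$. Otherwise, use Gronwall only to show that $D$ equals its Feynman--Kac expression, since their difference solves a homogeneous linear stopped equation with zero data.
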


\begin{proof}
First, since $\psi_{R,x_0}\leq1$ and $\Psi(x,u)\geq0$ for $u\geq0$, the
mild equation \eqref{eq:PDE_shifted_appendix} gives
\begin{equation}\label{eq:uthetaPsi_uniform_bound}
0\leq u_\theta(x,t)\leq \theta t,
\qquad x\in\mathbb{R}^d,\quad t\geq0 .
\end{equation}
In particular,
\begin{equation}\label{eq:uthetaPsi_uniform_bound_T}
0\leq u_\theta(x,t)\leq \theta T,
\qquad x\in\mathbb{R}^d,\quad 0\leq t\leq T .
\end{equation}

Fix $0<R_0<R$ and $T>0$. Define
$$
U(x):=u_{R,\Phi_0}(|x-x_0|),
\qquad x\in B(x_0,R).
$$
By the elliptic Keller construction, $U\in C^2(B(x_0,R))$,
$$
\frac{1}{2}\Delta U=\Phi_0(U)
\qquad \text{in }B(x_0,R),
$$
and $U(x)\to+\infty$ as $|x-x_0|\uparrow R$. Hence we may choose $r$ with
$R_0<r<R$ such that
\begin{equation}\label{eq:choose_r_Psi}
u_{R,\Phi_0}(r)>\theta T .
\end{equation}
Set
$$
D:=B(x_0,r),
\qquad
\tau_D:=\inf\{s>0:W_s\notin D\}.
$$
Since Brownian paths are continuous, $W_{\tau_D}\in\partial D$ on
$\{\tau_D<\infty\}$. By \eqref{eq:uthetaPsi_uniform_bound_T} and
\eqref{eq:choose_r_Psi}, we have
\begin{equation}\label{eq:lateral_boundary_Psi}
u_\theta(x,t)\leq \theta T<u_{R,\Phi_0}(r)=U(x),
\qquad x\in\partial D,\quad 0\leq t\leq T.
\end{equation}
Also,
\begin{equation}\label{eq:initial_boundary_Psi}
u_\theta(x,0)=0\leq U(x),
\qquad x\in D.
\end{equation}

We now compare $u_\theta$ and $U$ on $D\times[0,T]$ in the mild sense.
Since $r<R$, $\psi_{R,x_0}=0$ on $D$. Let
$$
M:=\theta T+\sup_{x\in\overline D}U(x).
$$
By the L\'evy--Khintchine representation and the boundedness assumptions on
the coefficients of $\Psi$, the function $\Psi(x,\cdot)$ is Lipschitz on
$[0,M]$, uniformly for $x\in\overline D$. Indeed, for $0\leq z\leq M$,
$$
\left|\partial_u\Psi(x,z)\right|
\leq
\|\alpha\|_\infty
+
2M\|\beta\|_\infty
+
\sup_{y\in\mathbb{R}^d}
\int_0^\infty \lambda(1-e^{-\lambda M})\pi(y,\dd\lambda)<\infty .
$$
Choose $L>0$ such that
\begin{equation}\label{eq:Lipschitz_Psi_appendix}
|\Psi(x,a)-\Psi(x,b)|\leq L|a-b|,
\qquad x\in\overline D,\quad 0\leq a,b\leq M.
\end{equation}
Define
$$
H(x,z):=Lz-\Psi(x,z),
\qquad x\in\overline D,\quad 0\leq z\leq M.
$$
Then $H(x,\cdot)$ is non-decreasing on $[0,M]$, uniformly in
$x\in\overline D$. Indeed, if $0\leq a\leq b\leq M$, then
$$
H(x,b)-H(x,a)
=
L(b-a)-\{\Psi(x,b)-\Psi(x,a)\}\geq0
$$
by \eqref{eq:Lipschitz_Psi_appendix}. Moreover, $H$ is Lipschitz in the
second variable on this interval; in particular,
\begin{equation}\label{eq:H_Lipschitz_appendix}
|H(x,a)-H(x,b)|\leq 2L|a-b|,
\qquad x\in\overline D,\quad 0\leq a,b\leq M.
\end{equation}

We use the stopped mild formulation on the ball $D$. We first spell out the
dynamic programming step. Put
$$
F(y,s):=\theta\psi_{R,x_0}(y)-\Psi(y,u_\theta(y,s)),
\qquad y\in\mathbb{R}^d,\quad 0\leq s\leq T .
$$
Then \eqref{eq:PDE_shifted_appendix} can be written as
$$
u_\theta(x,t)=P_x\left[\int_0^t F(W_s,t-s)\dd s\right].
$$
Let
$$
\tau:=t\wedge\tau_D .
$$
Splitting the integral at $\tau$, we obtain
\begin{align}
u_\theta(x,t)
&=
P_x\left[
\int_0^\tau F(W_s,t-s)\dd s
\right]+P_x\left[
\mathbf 1_{\{\tau_D<t\}}
\int_{\tau_D}^t F(W_s,t-s)\dd s
\right].
\label{eq:dynamic_split_appendix}
\end{align}
By the strong Markov property of Brownian motion at $\tau_D$, the second
term in \eqref{eq:dynamic_split_appendix} is
\begin{align}
 P_x\left[
\mathbf 1_{\{\tau_D<t\}}
\int_{\tau_D}^t F(W_s,t-s)\dd s
\right] 
&=P_x\left[\mathbf 1_{\{\tau_D<t\}}
P_{W_{\tau_D}}\left[
\int_0^{t-\tau_D}F(W_s,t-\tau_D-s)\dd s
\right]
\right] \nonumber\\
&=P_x\left[
\mathbf 1_{\{\tau_D<t\}}
u_\theta(W_{\tau_D},t-\tau_D)
\right].\label{eq:dynamic_after_exit_appendix}
\end{align}
On the other hand, since $W_s\in D$ for $0\leq s<\tau$ and
$\psi_{R,x_0}=0$ in $D$, we have
$$
F(W_s,t-s)=-\Psi(W_s,u_\theta(W_s,t-s)).
$$
By the definition
$$H(x,z):=Lz-\Psi(x,z),$$
this becomes
$$
F(W_s,t-s)=-Lu_\theta(W_s,t-s)+H(W_s,u_\theta(W_s,t-s)),
\qquad 0\leq s<\tau .
$$
Combining this identity with \eqref{eq:dynamic_split_appendix} and
\eqref{eq:dynamic_after_exit_appendix}, and using the convention
$u_\theta(W_t,0)=0$ on $\{\tau_D\geq t\}$, gives the undiscounted stopped
mild identity
\begin{align}\label{eq:undiscounted_stopped_appendix}
u_\theta(x,t)
=P_x\left[u_\theta(W_\tau,t-\tau)\right] +
P_x\left[\int_0^\tau\left\{-Lu_\theta(W_s,t-s)+H(W_s,u_\theta(W_s,t-s))
\right\}\dd s
\right].
\end{align}
Equivalently, applying the variation-of-constants formula to
\eqref{eq:undiscounted_stopped_appendix} yields
\begin{align}\label{eq:stopped_mild_u_appendix}
u_\theta(x,t)=P_x\left[e^{-L(t\wedge\tau_D)}u_\theta(W_{t\wedge\tau_D},t-t\wedge\tau_D)
\right]+P_x\left[\int_0^{t\wedge\tau_D}e^{-Ls}H(W_s,u_\theta(W_s,t-s))\dd s
\right].
\end{align}
Indeed, the last identity follows by applying
\eqref{eq:undiscounted_stopped_appendix} on a partition of
$[0,t\wedge\tau_D]$, multiplying the contribution starting at time $s$ by
$e^{-Ls}$, summing the resulting telescoping expression, and then letting
the mesh tend to zero. The passage to the limit is justified by
\eqref{eq:uthetaPsi_uniform_bound_T} and the boundedness of $H$ on
$\overline D\times[0,M]$.

On the other hand, since $U$ is time-independent and
$\frac12\Delta U=\Phi_0(U)$ in $D$, we apply the exponentially weighted
Dynkin formula to the stopped Brownian motion. This is just It\^o's formula
applied to $e^{-Ls}U(W_s)$ up to the bounded stopping time
$$
\sigma:=t\wedge\tau_D .
$$
Equivalently, one may use the standard Dynkin formula with killing rate $L$;
see, for example, \cite[Theorem 7.3.3 and Theorem 7.4.1]{Oksendal03}.

Since $r<R$, the function $U$ is $C^2$ in a neighbourhood of $\overline D$.
Thus $U$, $\nabla U$, and $\Delta U$ are bounded on $\overline D$. It\^o's
formula gives
\begin{align}\label{eq:ito_U_appendix}
e^{-L\sigma}U(W_\sigma)=U(x)+\int_0^\sigma e^{-Ls}\nabla U(W_s)\cdot \dd W_s+
\int_0^\sigma e^{-Ls}\left\{\frac12\Delta U(W_s)-LU(W_s)
\right\}\dd s .
\end{align}
The stochastic integral in \eqref{eq:ito_U_appendix} is a true martingale with mean zero, because the integrand is bounded on $[0,\sigma]$. Taking
$P_x$-expectations and rearranging, we obtain
\begin{align}\label{eq:dynkin_U_appendix}
U(x)=P_x\left[e^{-L(t\wedge\tau_D)}
U(W_{t\wedge\tau_D})
\right]+P_x\left[
\int_0^{t\wedge\tau_D}
e^{-Ls}
\left\{
LU(W_s)-\frac12\Delta U(W_s)
\right\}\dd s
\right].
\end{align}
Since $\frac12\Delta U=\Phi_0(U)$ in $D$, and the value at the single time
$\tau_D$ is irrelevant for the time integral, \eqref{eq:dynkin_U_appendix}
becomes
\begin{align}\label{eq:stopped_mild_U_exact_appendix}
U(x)=P_x\left[e^{-L(t\wedge\tau_D)}U(W_{t\wedge\tau_D})\right]+
P_x\left[\int_0^{t\wedge\tau_D}e^{-Ls}\{LU(W_s)-\Phi_0(U(W_s))\}\dd s
\right].
\end{align}

Since $\overline D\subset\overline{B(x_0,R)}$ and
$$
\Psi(y,u)\geq\Phi_0(u),
\qquad y\in\overline{B(x_0,R)},\quad u\geq0,
$$
we have
$$
LU(y)-\Phi_0(U(y))
\geq
LU(y)-\Psi(y,U(y))
=
H(y,U(y)),
\qquad y\in D.
$$
Therefore \eqref{eq:stopped_mild_U_exact_appendix} yields
\begin{align}\label{eq:stopped_mild_U_appendix}
U(x)\geq P_x\left[e^{-L(t\wedge\tau_D)}
U(W_{t\wedge\tau_D})
\right] +P_x\left[
\int_0^{t\wedge\tau_D}
e^{-Ls}H(W_s,U(W_s))\dd s
\right].
\end{align}

Set
$$
w(x,t):=u_\theta(x,t)-U(x),
\qquad x\in D,\quad 0\leq t\leq T.
$$
Subtracting \eqref{eq:stopped_mild_U_appendix} from
\eqref{eq:stopped_mild_u_appendix}, and using
\eqref{eq:lateral_boundary_Psi} and \eqref{eq:initial_boundary_Psi} for the
terminal terms, we obtain
\begin{align}
w(x,t)
&\leq
P_x\left[
\int_0^{t\wedge\tau_D}
e^{-Ls}
\{H(W_s,u_\theta(W_s,t-s))-H(W_s,U(W_s))\}\dd s
\right].
\label{eq:w_mild_appendix}
\end{align}
The terminal contribution is non-positive: on $\{\tau_D<t\}$ this follows
from \eqref{eq:lateral_boundary_Psi}, while on $\{\tau_D\geq t\}$ it equals
$-e^{-Lt}U(W_t)\leq0$.

By the monotonicity of $H(x,\cdot)$ and \eqref{eq:H_Lipschitz_appendix},
for all $x\in\overline D$ and $0\leq a,b\leq M$,
$$
H(x,a)-H(x,b)\leq 2L(a-b)^+.
$$
Hence \eqref{eq:w_mild_appendix} gives
$$
w^+(x,t)
\leq
2L\,
P_x\left[
\int_0^{t\wedge\tau_D}
w^+(W_s,t-s)\dd s
\right],
\qquad x\in D,\quad 0\leq t\leq T.
$$
Let
$$
m(t):=\sup_{0\leq s\leq t}\sup_{x\in D}w^+(x,s),
\qquad 0\leq t\leq T.
$$
Then
$$
m(t)\leq 2L\int_0^t m(t-s)\dd s
=
2L\int_0^t m(s)\dd s.
$$
By Gronwall's lemma, $m(t)=0$ for $0\leq t\leq T$. Consequently,
$$
u_\theta(x,t)\leq U(x)=u_{R,\Phi_0}(|x-x_0|),
\qquad x\in D,\quad 0\leq t\leq T.
$$
Since $\overline{B(x_0,R_0)}\subset D$, we deduce that for every $0<R_0<R$ and every $T>0$,
\begin{equation}
u_\theta(x,t)\leq u_{R,\Phi_0}(|x-x_0|),
\qquad x\in \overline{B(x_0,R_0)},\quad 0\leq t\leq T .
\end{equation}
Finally, since $R_0<R$ and $T>0$ were arbitrary, we obtain
\eqref{eq:localBarrierPsi_alltime}.
\end{proof}

\begin{proof}[Proof of Proposition \ref{prop_upperbound}]
Apply Lemma \ref{le:localBarrierPsi} with $x_0=0$ and $\Phi_0=\Phi_R$. The
hypothesis of the lemma is exactly Condition \ref{con_localize} on
$\overline{B(0,R)}$. Since $\psi_{R,0}=\psi_R$, the solution $u_\theta$ in
the lemma is $u_\theta^\Psi$. Hence
$$
0\leq u_\theta^\Psi(x,t)\leq u_{R,\Phi_R}(x),
\qquad x\in B(0,R),\quad t\geq0,
$$
which is \eqref{stp2}.
\end{proof}

\bibliographystyle{abbrv}
\bibliography{SBMbib}

@article {Delmas99,
    AUTHOR = {Delmas, Jean-Fran\c{c}ois},
     TITLE = {Path properties of superprocesses with a general branching
              mechanism},
   JOURNAL = {Ann. Probab.},
  FJOURNAL = {The Annals of Probability},
    VOLUME = {27},
      YEAR = {1999},
    NUMBER = {3},
     PAGES = {1099--1134},
      ISSN = {0091-1798,2168-894X},
   MRCLASS = {60G57 (60J25 60J55 60J80)},
  MRNUMBER = {1733142},
MRREVIEWER = {Klaus\ Fleischmann},
       DOI = {10.1214/aop/1022677441},
       URL = {https://doi.org/10.1214/aop/1022677441},
}

@article {Keller57,
    AUTHOR = {Keller, J. B.},
     TITLE = {On solutions of {$\Delta u=f(u)$}},
   JOURNAL = {Comm. Pure Appl. Math.},
  FJOURNAL = {Communications on Pure and Applied Mathematics},
    VOLUME = {10},
      YEAR = {1957},
     PAGES = {503--510},
      ISSN = {0010-3640,1097-0312},
   MRCLASS = {35.0X},
  MRNUMBER = {91407},
MRREVIEWER = {F.\ W.\ Perkins},
       DOI = {10.1002/cpa.3160100402},
       URL = {https://doi.org/10.1002/cpa.3160100402},
}

@article {Dawson75,
    AUTHOR = {Dawson, D. A.},
     TITLE = {Stochastic evolution equations and related measure processes},
   JOURNAL = {J. Multivariate Anal.},
  FJOURNAL = {Journal of Multivariate Analysis},
    VOLUME = {5},
      YEAR = {1975},
     PAGES = {1--52},
      ISSN = {0047-259X},
   MRCLASS = {60H10},
  MRNUMBER = {388539},
MRREVIEWER = {Stanley\ L.\ Boylan},
       DOI = {10.1016/0047-259X(75)90054-8},
}

@article {JP99,
    AUTHOR = {Engl\"ander, J\'anos and Pinsky, Ross G.},
     TITLE = {On the construction and support properties of measure-valued
              diffusions on {$D\subseteq{\bf R}^d$} with spatially dependent
              branching},
   JOURNAL = {Ann. Probab.},
  FJOURNAL = {The Annals of Probability},
    VOLUME = {27},
      YEAR = {1999},
    NUMBER = {2},
     PAGES = {684--730},
      ISSN = {0091-1798,2168-894X},
   MRCLASS = {60J80 (60G57 60J60)},
  MRNUMBER = {1698955},
MRREVIEWER = {Patrick\ Fitzsimmons},
       DOI = {10.1214/aop/1022677383},
       URL = {https://doi.org/10.1214/aop/1022677383},
}

@article {JP06,
    AUTHOR = {Engl\"ander, J\'anos and Pinsky, Ross G.},
     TITLE = {The compact support property for measure-valued processes},
   JOURNAL = {Ann. Inst. H. Poincar\'e{} Probab. Statist.},
  FJOURNAL = {Annales de l'Institut Henri Poincar\'e. Probabilit\'es et
              Statistiques},
    VOLUME = {42},
      YEAR = {2006},
    NUMBER = {5},
     PAGES = {535--552},
      ISSN = {0246-0203},
   MRCLASS = {60J80 (60J60)},
  MRNUMBER = {2259973},
MRREVIEWER = {Yanxia\ Ren},
       DOI = {10.1016/j.anihpb.2005.07.001},
       URL = {https://doi.org/10.1016/j.anihpb.2005.07.001},
}

@article {Wata68,
    AUTHOR = {Watanabe, Shinzo},
     TITLE = {A limit theorem of branching processes and continuous state
              branching processes},
   JOURNAL = {J. Math. Kyoto Univ.},
  FJOURNAL = {Journal of Mathematics of Kyoto University},
    VOLUME = {8},
      YEAR = {1968},
     PAGES = {141--167},
      ISSN = {0023-608X},
   MRCLASS = {60.67},
  MRNUMBER = {237008},
MRREVIEWER = {P.\ E.\ Ney},
       DOI = {10.1215/kjm/1250524180},
}

@article{Grey_1974, title={Asymptotic behaviour of continuous time, continuous state-space branching processes}, volume={11}, DOI={10.2307/3212550}, number={4}, journal={Journal of Applied Probability}, author={Grey, D. R.}, year={1974}, pages={669–677}}

@article{SHEU1997129,
title = {Lifetime and compactness of range for super-Brownian motion with a general branching mechanism},
journal = {Stochastic Processes and their Applications},
volume = {70},
number = {1},
pages = {129-141},
year = {1997},
issn = {0304-4149},
doi = {https://doi.org/10.1016/S0304-4149(97)00059-8},
author = {Yuan-Chung Sheu},
}

@book {MR838085,
    AUTHOR = {Ethier, Stewart N. and Kurtz, Thomas G.},
     TITLE = {Markov processes: Characterization and convergence},
    SERIES = {Wiley Series in Probability and Mathematical Statistics:
              Probability and Mathematical Statistics},
 PUBLISHER = {John Wiley \& Sons, Inc.},
ADDRESS = {New York},
      YEAR = {1986},
     PAGES = {x+534},
      ISBN = {0-471-08186-8},
   MRCLASS = {60J25 (60B10 60F05 60F17 60G44 60J80)},
  MRNUMBER = {838085},
MRREVIEWER = {S.\ R. S. Varadhan},
       DOI = {10.1002/9780470316658},
}

@book {MR1280712,
    AUTHOR = {Dynkin, Eugene B.},
     TITLE = {An introduction to branching measure-valued processes},
    SERIES = {CRM Monograph Series},
    VOLUME = {6},
 PUBLISHER = {American Mathematical Society},
ADDRESS = {Providence, RI},
      YEAR = {1994},
     PAGES = {x+134},
      ISBN = {0-8218-0269-0},
   MRCLASS = {60J80 (60G57 60J25 60K99)},
  MRNUMBER = {1280712},
MRREVIEWER = {Thomas\ G.\ Kurtz},
       DOI = {10.1090/crmm/006},
}

@incollection {Dawson93,
    AUTHOR = {Dawson, Donald A.},
     TITLE = {Measure-valued {M}arkov processes},
 BOOKTITLE = {\'Ecole d'\'Et\'e{} de {P}robabilit\'es de {S}aint-{F}lour
              {XXI}---1991},
    SERIES = {Lecture Notes in Math.},
    VOLUME = {1541},
     PAGES = {1--260},
 PUBLISHER = {Springer},
ADDRESS = {Berlin},
      YEAR = {1993},
      ISBN = {3-540-56622-8},
   MRCLASS = {60G57 (60H15 60J70 60J80)},
  MRNUMBER = {1242575},
MRREVIEWER = {Luis\ G.\ Gorostiza},
       DOI = {10.1007/BFb0084190},
}

@article{dawson1994almost,
  title={Almost-sure path properties of (2, d, $\beta$)-superprocesses},
  author={Dawson, D. A. and Vinogradov, V.},
  journal={Stochastic Process. Appl.},
  fjournal={Stochastic processes and their applications},
  volume={51},
  number={2},
  pages={221--258},
  year={1994},
  publisher={Elsevier}
}

@article {DIP89,
    AUTHOR = {Dawson, D. A. and Iscoe, I. and Perkins, E. A.},
     TITLE = {Super-{B}rownian motion: path properties and hitting
              probabilities},
   JOURNAL = {Probab. Theory Related Fields},
  FJOURNAL = {Probability Theory and Related Fields},
    VOLUME = {83},
      YEAR = {1989},
    NUMBER = {1-2},
     PAGES = {135--205},
      ISSN = {0178-8051,1432-2064},
   MRCLASS = {60G17 (60G57)},
  MRNUMBER = {1012498},
MRREVIEWER = {Steven\ N.\ Evans},
       DOI = {10.1007/BF00333147},
       URL = {https://doi.org/10.1007/BF00333147},
}

@article {HK14,
    AUTHOR = {Hesse, Marion and Kyprianou, Andreas E.},
     TITLE = {The mass of super-{B}rownian motion upon exiting balls and
              {S}heu's compact support condition},
   JOURNAL = {Stochastic Process. Appl.},
  FJOURNAL = {Stochastic Processes and their Applications},
    VOLUME = {124},
      YEAR = {2014},
    NUMBER = {6},
     PAGES = {2003--2022},
      ISSN = {0304-4149,1879-209X},
   MRCLASS = {60J68 (60J80)},
  MRNUMBER = {3188347},
MRREVIEWER = {Jos\'e\ Villa-Morales},
       DOI = {10.1016/j.spa.2014.01.011},
       URL = {https://doi.org/10.1016/j.spa.2014.01.011},
}

@article {DhersinLeGall98,
    AUTHOR = {Dhersin, Jean-St\'ephane and Le Gall, Jean-Fran\c cois},
     TITLE = {Kolmogorov's test for super-{B}rownian motion},
   JOURNAL = {Ann. Probab.},
  FJOURNAL = {The Annals of Probability},
    VOLUME = {26},
      YEAR = {1998},
    NUMBER = {3},
     PAGES = {1041--1056},
      ISSN = {0091-1798,2168-894X},
   MRCLASS = {60J80 (60G17 60G57)},
  MRNUMBER = {1634414},
MRREVIEWER = {Steven\ N.\ Evans},
       DOI = {10.1214/aop/1022855744},
       URL = {https://doi.org/10.1214/aop/1022855744},
}

@article {Osserman57,
    AUTHOR = {Osserman, Robert},
     TITLE = {On the inequality {$\Delta u\geq f(u)$}},
   JOURNAL = {Pacific J. Math.},
  FJOURNAL = {Pacific Journal of Mathematics},
    VOLUME = {7},
      YEAR = {1957},
     PAGES = {1641--1647},
      ISSN = {0030-8730,1945-5844},
   MRCLASS = {35.00},
  MRNUMBER = {98239},
MRREVIEWER = {R.\ M.\ Redheffer},
       URL = {http://projecteuclid.org/euclid.pjm/1103043236},
}

@article{Dynkin1991PDE,
  title={A probabilistic approach to one class of nonlinear differential equations},
  author={E. B. Dynkin},
  journal={Probability Theory and Related Fields},
  year={1991},
  volume={89},
  pages={89-115},
}

@book{BGT87,
  author    = {Bingham, N. H. and Goldie, C. M. and Teugels, J. L.},
  title     = {Regular Variation},
  series    = {Encyclopedia of Mathematics and its Applications},
  volume    = {27},
  publisher = {Cambridge University Press},
  address   = {Cambridge},
  year      = {1987},
  doi       = {10.1017/CBO9780511721434}
}

@book{Oksendal03,
  author    = {{\O}ksendal, Bernt},
  title     = {Stochastic Differential Equations: An Introduction with Applications},
  edition   = {6},
  publisher = {Springer},
  year      = {2003}
}

@article {Iscoe86,
    AUTHOR = {Iscoe, I.},
     TITLE = {A weighted occupation time for a class of measure-valued
              branching processes},
   JOURNAL = {Probab. Theory Relat. Fields},
  FJOURNAL = {Probability Theory and Related Fields},
    VOLUME = {71},
      YEAR = {1986},
    NUMBER = {1},
     PAGES = {85--116},
      ISSN = {0178-8051,1432-2064},
   MRCLASS = {60J80},
  MRNUMBER = {814663},
MRREVIEWER = {Harry\ I.\ Cohn},
       DOI = {10.1007/BF00366274},
}

@incollection {MR1915445,
    AUTHOR = {Perkins, Edwin},
     TITLE = {Dawson-{W}atanabe superprocesses and measure-valued
              diffusions},
 BOOKTITLE = {Lectures on probability theory and statistics
              ({S}aint-{F}lour, 1999)},
    SERIES = {Lecture Notes in Math.},
    VOLUME = {1781},
     PAGES = {125--324},
 PUBLISHER = {Springer, Berlin},
      YEAR = {2002},
      ISBN = {3-540-43736-3},
   MRCLASS = {60G57 (60H15 60J80 60K35)},
  MRNUMBER = {1915445},
MRREVIEWER = {Anton\ Wakolbinger},
}

@book {LeGall99,
    AUTHOR = {Le Gall, Jean-Fran\c{c}ois},
     TITLE = {Spatial branching processes, random snakes and partial
              differential equations},
    SERIES = {Lectures in Mathematics ETH Z\"urich},
 PUBLISHER = {Birkh\"auser Verlag, Basel},
      YEAR = {1999},
     PAGES = {x+163},
      ISBN = {3-7643-6126-3},
   MRCLASS = {60J80 (35B05 35J60 35R60 60G57 60J85 60K35)},
  MRNUMBER = {1714707},
MRREVIEWER = {John\ Verzani},
       DOI = {10.1007/978-3-0348-8683-3},
       URL = {https://doi.org/10.1007/978-3-0348-8683-3},
}

@article {CD10,
    AUTHOR = {Costin, O. and Dupaigne, L.},
     TITLE = {Boundary blow-up solutions in the unit ball: asymptotics,
              uniqueness and symmetry},
   JOURNAL = {J. Differential Equations},
  FJOURNAL = {Journal of Differential Equations},
    VOLUME = {249},
      YEAR = {2010},
    NUMBER = {4},
     PAGES = {931--964},
      ISSN = {0022-0396,1090-2732},
   MRCLASS = {35J91 (35B07 35B44 35C20)},
  MRNUMBER = {2652158},
MRREVIEWER = {Dian\ K.\ Palagachev},
       DOI = {10.1016/j.jde.2010.02.023},
       URL = {https://doi.org/10.1016/j.jde.2010.02.023},
}

@article {Ren04,
    AUTHOR = {Ren, Yan-Xia},
     TITLE = {Support properties of super-{B}rownian motions with spatially
              dependent branching rate},
   JOURNAL = {Stochastic Process. Appl.},
  FJOURNAL = {Stochastic Processes and their Applications},
    VOLUME = {110},
      YEAR = {2004},
    NUMBER = {1},
     PAGES = {19--44},
      ISSN = {0304-4149,1879-209X},
   MRCLASS = {60J80 (60G57 60J45)},
  MRNUMBER = {2052135},
MRREVIEWER = {Peter\ M\"orters},
       DOI = {10.1016/j.spa.2003.09.009},
       URL = {https://doi.org/10.1016/j.spa.2003.09.009},
}

@article {Sheu94,
    AUTHOR = {Sheu, Yuan-Chung},
     TITLE = {Asymptotic behavior of superprocesses},
   JOURNAL = {Stochastics Stochastics Rep.},
  FJOURNAL = {Stochastics and Stochastics Reports},
    VOLUME = {49},
      YEAR = {1994},
    NUMBER = {3-4},
     PAGES = {239--252},
      ISSN = {1045-1129},
   MRCLASS = {60G17 (60G57 60J45 60J80)},
  MRNUMBER = {1785007},
}

@book {Li22,
    AUTHOR = {Li, Zenghu},
     TITLE = {Measure-valued branching {M}arkov processes},
    SERIES = {Probability Theory and Stochastic Modelling},
    VOLUME = {103},
   EDITION = {Second},
 PUBLISHER = {Springer},
ADDRESS = {Berlin},
      YEAR = {2022},
     PAGES = {xv+475},
      ISBN = {978-3-662-66909-9; 978-3-662-66910-5},
   MRCLASS = {60-02 (60G57 60J35 60J40 60J68 60J70 60J80 60J85)},
  MRNUMBER = {4704078},
       DOI = {10.1007/978-3-662-66910-5},
}
\end{document}